\newcommand{\dist}{\mathop{\rm dist}\nolimits}
\newcommand{\prox}{\mathop{\rm prox}\nolimits}
\newcommand{\amp}{\mathop{\:\:\,}\nolimits}
\newcommand{\Real}{\mathbb{R}}
\newcommand{\ba}{\boldsymbol{a}}
\newcommand{\bb}{\boldsymbol{b}}
\newcommand{\be}{\boldsymbol{e}}
\newcommand{\bp}{\boldsymbol{p}}
\newcommand{\bs}{\boldsymbol{s}}
\newcommand{\bu}{\boldsymbol{u}}
\newcommand{\bx}{\boldsymbol{x}}
\newcommand{\by}{\boldsymbol{y}}
\newcommand{\bz}{\boldsymbol{z}}
\newcommand{\bM}{\boldsymbol{M}}
\newcommand{\bX}{\boldsymbol{X}}
\newcommand{\balpha}{\boldsymbol{\alpha}}
\newcommand{\bbeta}{\boldsymbol{\beta}}
\newcommand{\bepsilon}{\boldsymbol{\epsilon}}
\newcommand{\btheta}{\boldsymbol{\theta}}
\newcommand{\bxi}{\boldsymbol{\xi}}
\newcommand{\bXi}{\boldsymbol{\Xi}}
\begin{document}

\title{Distance Majorization and Its Applications
}

\author{Eric C. Chi         \and
        Hua Zhou \and \\
        Kenneth Lange}

\authorrunning{E. Chi et al.} 

\institute{E. Chi \at
Department of Human Genetics, University of California, Los Angeles, CA 90095   \\
\email{ecchi@ucla.edu} \\
\and
              H. Zhou \at
Department of Statistics, North Carolina State University, Raleigh, NC 27695-8203 \\
\email{hua\_zhou@ncsu.edu} \\
\and
	     K. Lange  \at
Departments of Biomathematics, Human Genetics, and Statistics, University of California,
Los Angeles, CA 90095   \\
\email{klange@ucla.edu} \\
}

\date{Received: date / Accepted: date}

\maketitle

\begin{abstract}
The problem of minimizing a continuously differentiable convex function over an intersection of closed convex sets is ubiquitous in applied mathematics. It is particularly interesting when it is easy to project onto each separate set, but nontrivial to project onto their intersection. Algorithms based on Newton's method such as the interior point method are viable for small to medium-scale problems. However, modern applications in statistics, engineering, and machine learning are posing problems with potentially tens of thousands of parameters or more. We revisit this convex programming problem and propose an algorithm that scales well with dimensionality. Our proposal is an instance of a sequential unconstrained minimization technique and revolves around three ideas: the majorization-minimization (MM) principle, the classical penalty method for constrained optimization, and quasi-Newton acceleration of fixed-point algorithms. The performance of our distance majorization algorithms is illustrated in several applications.
\keywords{constrained optimization \and majorization-minimization (MM) \and sequential unconstrained minimization \and projection}
\subclass{65K05 \and 90C25 \and 90C30 \and 62J02}
\end{abstract}

\section{Introduction}
A wide spectrum of problems in applied mathematics and statistics can be formulated as an instance of the convex programming problem
\begin{eqnarray}
\label{eq:opt_prob}
\underset{\bx \in \cap_i C_i}{\min} \; \ell(\bx),
\end{eqnarray}
where $\ell(\bx)$ is continuously differentiable and convex and the $C_i$ are closed convex sets in $\Real^p$. At one extreme, problem (\ref{eq:opt_prob}) includes classical least squares. At the other, it includes finding a feasible point in the intersection of several closed convex sets. In between, the formulation covers a variety of shape restricted regression problems such as fitting a support vector machine and projecting an exterior point onto a complicated convex set. Great progress has been made in attacking specific incarnations of problem (\ref{eq:opt_prob}). The projected gradient algorithm and its Newton and quasi-Newton extensions have been very successful when constraints are simple, for example box constraints, and admit a correspondingly simple projection operator \cite{Ber1982,KimSraDhi2010,Ros1960,SchBerFri2009}. However, there is still room for improvement. In the current paper we present a unified approach to solving a smoothed relaxation of problem (\ref{eq:opt_prob}) via the majorization-minimization (MM) principle \cite{Lan2010}. This approach is especially attractive when it is easy to project onto each separate set $C_i$ but nontrivial to project onto their intersection.

Problem (\ref{eq:opt_prob}) can be written as the unconstrained optimization problem
\begin{eqnarray}
\label{eq:opt_prob_smooth}
\underset{\bx}{\min} \; \ell(\bx) + \sum_{i} \delta_{C_i}(\bx),
\end{eqnarray}
where the indicator function $\delta_C(\bx)$ equals 0 if $\bx \in C$ and $\infty$ if not. Although problem (\ref{eq:opt_prob_smooth}) is now unconstrained, the indicator functions introduce two challenges. The new objective function takes on infinite values and is non-differentiable. This prompts us to replace $\delta_C(\bx)$ by a finite valued smooth approximation $\dist(\bx,C)^2 = \inf_{\by \in C} \|\bx-\by\|_2^2$, where $\| \cdot \|_2$ denotes the
standard Euclidean norm. Further progress can be made by solving the related problem
\begin{eqnarray}
\label{eq:quadratic_penalty}
\underset{\bx}{\min} \; \ell(\bx) + \frac{\mu}{2} \sum_{i=1}^m \dist(\bx,C_i)^2,
\end{eqnarray}
where $\mu$ is a positive parameter that penalizes deviation from the original feasible region. The smooth approximation introduced in formulation (\ref{eq:quadratic_penalty}) is an example of
the quadratic penalty method \cite{Ber2003,NocWri2006,Rus2006}. Problem $(\ref{eq:quadratic_penalty})$ has many appealing features. The problem is unconstrained with an objective function that is convex and differentiable when $\ell(\bx)$ is convex and differentiable. Consequently optimality conditions can be readily identified. The distance function is closely tied to the
projection $P_C(\bx)$ of $\bx$ onto $C$; specifically $\dist(\bx,C) = \| \bx - P_C(\bx) \|_2$, and $\nabla \dist(\bx,C)^2 = 2[\bx - P_C(\bx)]$. Thus, a point $\bx$ solves problem $(\ref{eq:quadratic_penalty})$ if and only if it satisfies the stationarity condition
\begin{eqnarray*}
{\mathbf 0} = \nabla \ell(\bx) + \mu \sum_i [\bx - P_{C_i}(\bx)].
\end{eqnarray*}

Of course finding such an $\bx$ is often analytically intractable due to the projection term.
To solve (\ref{eq:quadratic_penalty}) iteratively, we resort to the MM principle. Because we rely on majorizing $\dist(\bx, C)$, we call our approach distance majorization.
A key step in solving the subproblems will be calculating projection operators. Fortunately, many useful projection operators are easy to compute.
The best known examples include projection
onto: (a) a closed Euclidean ball, (b) a closed rectangle, (c) 
a hyperplane, (d) a closed halfspace,
(e) a vector subspace, (f) the set of positive semidefinite matrices,
(g) the unit simplex, (h) a closed $\ell_1$ ball, and (i) an
isotone convex cone. While there are no analytic solutions
for the last three projections, there are efficient algorithms for computing
them \cite{BarBarBre1972,DucShaSin2008,Mic1986,RobWriDyk1988,SilSen2005}.

The rest of the paper is organized as follows. 
After a brief review of the MM principle and its place among related iterative minimization schemes, we illustrate the virtues of distance majorization
in five different problem areas: (a) finding a point in the intersection
of a finite collection of closed convex sets,
(b) projection of
a point onto the closest point in the intersection of a finite collection of closed
convex sets, (c) convex regression, (d) classification via support vector machines, and (e)
the facilities location problem. The literature 
on some of the examples is enormous, so we apologize in advance for omitting
relevant references and slighting the ramifications
of the various models.  After our tour of examples, we present relevant convergence
theory in a general algorithmic framework. Our concluding discussion indicates
a few extensions and limitations of distance majorization.

\section{The MM Principle and Distance Majorization}

Although first articulated by the numerical analysts Ortega and Rheinboldt \cite{OrtRhe1970}, the MM principle currently enjoys its greatest vogue in computational statistics \cite{BecYanLan1997,LanHunYan2000}. The basic idea is to convert a hard optimization problem (for example, non-differentiable) into a sequence of simpler ones (for example, smooth). The MM principle requires majorizing the objective function $f(\by)$ by a surrogate function $g(\by \mid \bx)$ anchored at the current point $\bx$.  Majorization is a combination of the tangency condition $g(\bx \mid \bx) =  f(\bx)$ and the domination condition $g(\by \mid \bx)  \geq f(\by)$ for all $\by \in \Real^p$.  The iterates of the associated MM algorithm are defined by
\begin{equation}
  \label{eq:generic-MM-iterate}
  \bx_{n+1} := \underset{\by}{\arg \min}\; g(\by \mid \bx_{n}).
\end{equation}
Because 
\begin{equation}
  f(\bx_{n+1}) \leq g(\bx_{n+1} \mid \bx_{n}) \leq g(\bx_{n} \mid \bx_{n}) = f(\bx_{n}),
\end{equation}
the MM iterates generate a descent algorithm driving the objective function downhill. Strict inequality usually prevails
unless $\bx_n$ is a stationary point of $f(\bx)$.

The most useful majorization of $\dist(\bx,C)$ follows immediately from the observations
\begin{eqnarray*}
\dist(\bx,C) & \le & \|\bx-P_C(\bx_n)\|_2 , \quad \mbox{and} \quad
\dist(\bx_n,C) = \|\bx_n-P_C(\bx_n)\|_2
\end{eqnarray*}
for all pairs $\bx$ and $\bx_n$. In practice, majorizing $\dist(\bx,C)^2$
by $\|\bx-P_C(\bx_n)\|_2^2$ leads to more convenient updates than
majorizing $\dist(\bx,C)$ by $\|\bx-P_C(\bx_n)\|_2$.
Most of our applications can be phrased as minimizing the criterion
\begin{eqnarray}
\label{eq:general_surrogate}
g(\bx \mid \bx_n) = \ell(\bx) + \frac{\mu}{2} \sum_{i=1}^m \gamma_i \|\bx-P_{C_i}(\bx_n)\|_2^2,
\end{eqnarray}
for a convex loss $\ell(\bx)$, a collection $\{C_1,\ldots,C_m\}$ of closed convex sets, a positive penalization parameter $\mu$,
and a corresponding set of positive weights $\gamma_1,\ldots,\gamma_m$. Without loss in generality, we can require $\gamma_i$ to sum to one, since scaling of the weights can be absorbed into the overall penalty parameter $\mu$. Uniform weights equally penalize an iterate's violation of each constraint. Nonuniform weights will penalize constraint violations differently. This can be a useful mechanism if it is more important to satisfy some constraints over others in an application. In this paper we consider examples where constraints are all equally important and consequently employ uniform weights. For notational simplicity, we drop the weights from the remainder of our exposition but note that they can be employed in all the examples we cover.  The introduction of weights also leaves the convergence analysis presented later untouched. Algorithm~\ref{alg:DistanceMajorization} shows the pseudocode for the distance majorization algorithm.

We highlight the fact that the algorithm does not require the projection onto the intersection but rather only the projection onto each of the constituent sets $C_i$. As we will see in our first example, distance majorization
can be considered a generalization of the simultaneous projection algorithm for finding a point in the intersection of a collection of closed convex sets. 
We note, however, that distance majorization is not unique in this regard. For comparison's sake, we will also present a
dual ascent algorithm at the end of the next section that employs projections onto the constituent sets. Although the two methods exhibit comparable empirical performance,
the distance majorization algorithm is guaranteed to converge under weaker conditions than the dual ascent algorithm.

\begin{algorithm}[t]
  \caption{Distance Majorization}
  \label{alg:DistanceMajorization}  
  \begin{algorithmic}[1]
  	\State Given $\mu_0 > 0$ and a starting point $\bx_0$.
	\State $k \leftarrow 0$
	\Repeat
	\State $\by \leftarrow \bx_k$
    \Repeat
    	\For{$i = 1,\ldots, m$}
				\State $\bp_i \leftarrow P_{C_i}(\by)$
		\EndFor
		\State $\by \leftarrow \underset{\bu}{\arg\min} \; \ell(\bu) + \mu_k \sum_{i=1}^m \| \bu - \bp_i \|_2^2$		
	 \Until{convergence}
	 \State Choose new penalty parameter $\mu_{k+1} > \mu_k$	 
	 \State $k \leftarrow k+1$
	 \State $\bx_k \leftarrow \by$
	 \Until{convergence}
  \end{algorithmic}
\end{algorithm}


Finally, we note that MM algorithms are often plagued by a slow rate of convergence in a neighborhood of the minimum point. To remedy this situation, we employ
quasi-Newton acceleration.  MM algorithms can be accelerated via Newton's method just as the classic gradient descent algorithm. Adjusting the direction of steepest descent to account for the curvature in the objective yields more efficient step directions, and the number of iterations to a minimum can be drastically reduced.
Newton's method, however, requires computing and storing a full Hessian matrix, a demanding task in high-dimensional problems. To ease the computational burden,
quasi-Newton methods obtain curvature information by approximating the Hessian with secants or differences between successive iterates. Using more secants leads to better approximations of the Hessian initially, but using too many secants can actually lead to a poorer approximation as a smaller collection of secants can adapt more dynamically to changes in the curvature as the iterations proceed. Moreover, using more secants entails additional storage and computation. In the following examples, we use either two or five secants. Using a handful of secants is a modest additional burden in computation and storage but leads to noticeable acceleration in our MM algorithm.
For details on the scheme we employed as well as comparisons with alternative acceleration schemes, we direct readers to our earlier paper \cite{ZhoAleLan2011}.

\subsection{Sequential Unconstrained Minimization}

During the review of this paper, a referee brought to our attention that the MM algorithm is an instance of a broad class of methods termed sequential unconstrained minimization \cite{FiaMcC1990}.  Consider minimizing $f(\bx) : \Real^p \rightarrow \Real$ over a closed, non-empty set $C \subset \Real^p$. In sequential unconstrained minimization, we generate a sequence of iterates that minimize an unconstrained surrogate
\begin{eqnarray*}
\bx_{n} = \underset{\bx}{\arg\min} \; G_n(\bx) := f(\bx) + h_n(\bx),
\end{eqnarray*}
where the auxiliary functions $h_n(\bx)$ encode information about the constraint set $C$. 

When $h_n(\bx)$ is chosen so that $h_n(\bx) \geq 0$ for all $\bx$ and $h_n(\bx_{n-1}) = 0$, then
\begin{eqnarray*}
f(\bx_n) \leq f(\bx_n) + h_n(\bx_n) = G_n(\bx_n) \leq G_n(\bx_{n-1})  = f(\bx_{n-1}).
\end{eqnarray*}
This is a restatement of the descent property of an MM algorithm. In fact, we can identify $G_n(\bx) = g(\bx \mid \bx_{n-1})$ and $h_n(\bx) = g(\bx \mid \bx_{n-1}) - f(\bx)$. The tangency and domination conditions of the MM principle can be expressed alternatively as
\begin{eqnarray*}
G_n(\bx) = g(\bx \mid \bx_{n-1}) = f(\bx) + [g(\bx \mid \bx_{n-1}) - f(\bx)] = f(\bx) + h_n(\bx),
\end{eqnarray*}
where $h_n(\bx) \geq 0$ and $h_n(\bx_{n-1}) = 0$.

Byrne \cite{Byr2008a} introduced an important subset of sequential unconstrained minimization methods in which the auxiliary functions $h_n(\bx)$ satisfy
\begin{eqnarray}
\label{eq:summa}
G_n(\bx) - G_n(\bx_n) \geq h_{n+1}(\bx) \geq 0.
\end{eqnarray}
Methods satisfying (\ref{eq:summa}) are examples of sequential unconstrained minimization algorithms (SUMMA) and generate iterates for which $f(\bx_n)$ converges to $\inf_{\bx \in C} f(\bx)$. 
The SUMMA class includes a wide range of general iterative methods including barrier and penalty function methods, forward-backward splitting methods, and instances of the expectation maximization (EM) algorithm to name a few. Readers can consult the references \cite{Byr2008a,Byr2013,Byr2013a} to learn more about the breadth and applicability of the SUMMA class.

Given that examples of the EM algorithms have been shown to belong to the SUMMA class \cite{Byr2013} and that EM algorithms are a special case of the MM algorithm \cite{WuLan2010}, it is natural to wonder if MM algorithms, which have now been shown to be sequential unconstrained minimization algorithms, belong to the SUMMA class. The answer to this question is not immediately obvious. It is possible to concoct majorizations that fail to meet the SUMMA condition. Rewriting the SUMMA condition (\ref{eq:summa}) in terms of majorizations yields
\begin{eqnarray}
\label{eq:summa_mm}
g(\bx \mid \bx_{n-1}) - g(\bx_n \mid \bx_{n-1}) \geq g(\bx \mid \bx_n) - f(\bx) \geq 0,
\end{eqnarray}
for all $\bx$. Roughly speaking, (\ref{eq:summa_mm}) says that a sequence of majorizations should be hugging $f(\bx)$ uniformly more closely as the iterations proceed. While this is intuitively desirable, it is not necessary to ensure convergence of an MM algorithm. 

Nonetheless, it can be non-trivial to declare an iterative algorithm to be outside the SUMMA class, since we must prove that the resulting iterative algorithm could not be derived from some sequence of auxiliary functions that do obey (\ref{eq:summa}). 
Although majorizations chosen may violate the SUMMA condition, the resulting iterative algorithm may ultimately belong to the SUMMA class. In the Appendix we give an example of a convergent MM algorithm with a surrogate function that fails condition (\ref{eq:summa_mm}) globally. Locally the algorithm does belong to the SUMMA class. The ambiguity about the
proper domain of an algorithm spills over into selection of starting points and highlights the practical benefits of the MM principle, which leaves the door ajar to less restrictive auxiliary functions. Fortunately, the qualitative features of convergence carry over to this broader set of auxiliary functions.


\section{Examples of Distance Majorization}

\subsection*{Finding a Feasible Point}

When the intersection $C=\cap_{j=1}^m C_j$ is nonempty, majorization can be employed 
to locate a point in $C$.   The general idea is to drive the convex combination
\begin{eqnarray}
\label{eq:feasible_point}
f(\bx) & = & \sum_{i=1}^m \dist(\bx,C_i)^2
\end{eqnarray}
to 0.  Minimization of the surrogate function 
\begin{eqnarray*}
g(\bx \mid \bx_n) & = & \sum_{i=1}^m \|\bx-P_{C_i}(\bx_n)\|_2^2
\end{eqnarray*}
leads to the well-known simultaneous projection algorithm 
\begin{eqnarray*}
\bx_{n+1} & = & \sum_{i=1}^m P_{C_i}(\bx_n) .
\end{eqnarray*}
The earliest version of this algorithm is attributed to Cimmino \cite{Cim1938}.
It does not necessarily find the closest point in $C$ to $\bx$. The evidence 
suggests that simultaneous projection converges more slowly than alternating 
projection \cite{CenCheCom2012,Gou2008}. However, simultaneous projection enjoys 
the advantage of being parallelizable. One can invoke the theory of paracontractive 
operators to prove the convergence of both simultaneous and alternating projections 
\cite{Byr2008}.

The alternating projection algorithm can also be derived by distance majorization. 
The least distance between two closed convex sets $C_1$ and $C_2$ can be found
by minimimizing $\dist(\bx,C_2)^2$ over $\bx \in C_1$.  If we majorize $\dist(\bx,C_2)$ 
by the surrogate function $\|\bx-\by_n\|^2$, where $\by_n = P_{C_2}(\bx_n)$, then the 
minimum of the surrogate occurs at $P_{C_1}(\by_n)= P_{C_1}[P_{C_2}(\bx_n)]$.  When the two 
sets intersect, the least distance of 0 is achieved at any point in the intersection.
Thus, the MM principle provides very simple and direct derivations of the simultaneous and alternating projection algorithms.

Distance majorization can be generalized by replacing Euclidean distances with Bregman divergences. 
For simplicity we limit our discussion to Bregman divergences generated by 
strictly convex twice differentiable functions $\phi(\bx)$. The Bregman divergence
\begin{eqnarray*}
D_\phi(\by \mid \bx) & = & \phi(\by) - \phi(\bx) - \langle \nabla \phi(\bx), \by-\bx \rangle.
\end{eqnarray*}
is a convex function of $\by$ anchored at $\bx$ and majorizing 0. For instance, the four convex
functions $\phi_1(\by) = \|\by\|^2$, $\phi_2(\by)= -\sum_i \log y_i$, 
$\phi_3(\by) = \sum_i y_i \ln y_i$, and $\phi_4(\by \mid \bx)=\by^t \bM \by$ generate the Bregman divergences 
\begin{eqnarray*}
D_{\phi_1}(\by \mid \bx) & = & \|\by-\bx\|^2 \\
D_{\phi_2}(\by \mid \bx) & = & \sum_i \Big[{y_i \over x_i}-\log \Big({y_i \over x_i}\Big) -1\Big] \\
D_{\phi_3}(\by \mid \bx) & = & \sum_i y_i \ln \Big({y_i \over x_i}\Big) - \sum_i (y_i-x_i) \\
D_{\phi_4}(\by \mid \bx) & = & (\by-\bx)^t\bM (\by-\bx). 
\end{eqnarray*}
The matrix $\bM$ in the definition of $\phi_4(\by)$ is assumed positive definite.

The Bregman projection $P^\phi_C(\bx)$ onto a closed convex set $C$ is defined as
\begin{eqnarray*}
P^\phi_C(\bx) & = & \underset{\by \in C}{\arg\min} \; D_\phi(\by \mid \bx).
\end{eqnarray*}
Under suitable additional hypotheses, the Bregman projection exists. It
is unique because $\phi(\bx)$ is strictly convex. Moreover,
$P^\phi_C(\bx) = \bx$ (equivalently $D_\phi [P^\phi_C(\bx),\bx] = 0$)
exactly when $\bx \in C$. The analogue of the
proximity function (\ref{eq:feasible_point}) is the proximity function
\begin{eqnarray}
\label{eq:feasible_point_Bregman}
f(\bx) & = & \sum_{i=1}^m D_{\phi_i}[P^{\phi_i}_{C_i}(\bx) \mid \bx].
\end{eqnarray}
If we abbreviate $\by^i_n = P^{\phi_i}_{C_i}(\bx_n)$,
then the function
\begin{eqnarray*}
\varphi(\bx \mid \bx_n) & = & \sum_i D_{\phi_i}(\by^i_n \mid \bx),
\end{eqnarray*}
majorizes $f(\bx)$. The MM principle suggests that we minimize $\varphi(\bx \mid \bx_n)$.
A brief calculation produces the stationarity condition
\begin{eqnarray*}
{\bf 0} & = & \nabla \varphi(\bx \mid \bx_n) \amp = \amp \sum_i \nabla^2 \phi_i(\bx)(\bx - \by^i_n),
\end{eqnarray*}
where $\nabla^2 \phi_i(\bx)$ denotes the Hessian of $\phi_i(\bx)$. Readers can consult \cite{ByrCen2001} 
for a more in depth and thorough treatment of minimizing the proximity function (\ref{eq:feasible_point_Bregman}).

\subsection*{Projection onto the Intersection of Closed Convex Sets}

We next consider how distance majorization can be used to find the closest point in the intersection $C$ to a point $\by$. 
This involves minimizing the strictly convex function
\begin{eqnarray*}
f_{\mu}(\bx) & = &  {1 \over 2} \|\bx-\by\|^2+{\mu \over 2}\sum_{i=1}^m d_{C_i}(\bx)^2
\end{eqnarray*}
for $\mu$ large.  The solution $\bx(\mu)$ tends to the optimal point as $\mu$ tends
to $\infty$.  The MM update for minimizing the surrogate function
\begin{eqnarray*}
g_{\mu}(\bx \mid \bx_n) & = &  {1 \over 2} \|\bx-\by\|^2+{\mu \over 2}\sum_{i=1}^m \|\bx-P_{C_i}(\bx_n)\|^2
\end{eqnarray*}
is the convex combination
\begin{eqnarray}
\bx_{n+1} & = & {1 \over 1+\mu } \by+ {\mu \over 1+\mu }\sum_{i=1}^m P_{C_i}(\bx_n). \label{eqn:proj-MM}
\end{eqnarray}
The corresponding algorithm map $\psi(\bx) = \arg\min_{\bu} g_{\mu}(\bu \mid \bx)$ is strictly contractive with contraction constant $c=\mu/(1+\mu)$.  According to the contraction mapping theorem, the
iterates converge to the unique fixed point at geometric rate $c$. This fixed point coincides with the
minimum point of the function $f_{\mu}(\bx)$.  Indeed, $f_{\mu}(\bx)$ is differentiable with gradient
\begin{eqnarray*}
\nabla f_{\mu}(\bx) & = & \bx-\by+ \mu \sum_{i=1}^m [\bx-P_{C_i}(\bx)] .
\end{eqnarray*}
Rearrangement of the stationarity condition $\nabla f_{\mu}(\bx) ={\bf 0}$ gives
the fixed point condition
\begin{eqnarray*}
\bx & = & {1 \over 1+\mu } \by+ {\mu \over 1+\mu }\sum_{i=1}^m P_{C_i}(\bx) .
\end{eqnarray*}

One can generalize these results in various ways. For instance, if we replace Euclidean loss by weighted Euclidean loss ${1 \over 2} \sum_{i=1}^p w_i (x_i-y_i)^2$, then the MM update of the penalized loss has components
\begin{eqnarray*}
x_{n+1,i} & = & {w_i \over w_i+\mu } y_i+ {\mu \over w_i+\mu }\sum_{k=1}^m P_{C_k}(\bx_n)_i.
\end{eqnarray*}
The quadratic penalty method suffers from roundoff errors and numerical instability for large $\mu$. These are mitigated in the MM algorithm since its updates (\ref{eqn:proj-MM}) rely on stable projections and 
avoid matrix inversion. The slow rate $\mu/(1+\mu)$ of convergence for large $\mu$ is an issue. In practice
one can improve the rate of convergence by starting $\mu$ small and gradually increasing it to its target value. For a fixed $\mu$ one can also accelerate the MM iterates by systematic extrapolation. For instance, our quasi-Newton acceleration \cite{ZhoAleLan2011} often reduces the required number of iterations by one or two orders of magnitude. 

\subsubsection*{Projection as a Dual Program}

For the sake of comparison, we describe a dual algorithm for solving the projection problem. This alternative algorithm can be accelerated by Nesterov's method \cite{BecTeb2009,Nes2007}. The unaccelerated dual algorithm is a variation of Dykstra's algorithm \cite{Dyk1983}, which solves the dual problem by block descent. 

To derive the dual problem, we first observe that the primal problem consists of minimizing 
\begin{eqnarray*}
\frac{1}{2}\left\| \bx - \by \right\|_2^2 + \sum_{i=1}^m \delta_{C_i}(\bx_i)
\end{eqnarray*}
subject to $\bx_1 = \bx, \ldots, \bx_m = \bx$. The Lagrangian for the primal problem is
\begin{eqnarray*}
{\cal L}(\bx, \bx_1, \ldots, \bx_m, \bz_1, \ldots, \bz_m) 
& = & \frac{1}{2}\left\| \bx - \by \right\|_2^2 -\Big(\sum_{i=1}^m \bz_i\Big)^t\bx \\
& + & \sum_{i=1}^m [\delta_{C_i}(\bx_i) + \bz_i^t\bx_i].
\end{eqnarray*}
If $\bz =  (\bz_1, \ldots, \bz_m)$ denotes the concatenation of the dual variables $\bz_i$,
then the dual function  
\begin{eqnarray*}
{\cal D}(\bz) & = & \underset{\bx, \bx_1, \ldots, \bx_m}{\inf} {\cal L}(\bx, \bx_1, \ldots, \bx_m, \bz_1, \ldots, \bz_m) 
\end{eqnarray*}
reduces to
\begin{eqnarray*}
{\cal D}(\bz) & = - \frac{1}{2}\|\bs\|_2^2 - \bs^t\by - \sum_{i=1}^m \sup_{\bx_i \in C_i } 
(-\bz_i^t \bx_i ),
\end{eqnarray*}
where $\bs = \sum_{i=1}^m \bz_i$. The dual function can be maximized by the proximal gradient algorithm
\begin{equation*}
\begin{split}
\bx^{n} &\leftarrow \by + \sum_{i=1}^m \bz^{n}_i \\
\bz^{n+1}_i &\leftarrow \bz^{n}_i +  [P_{C_i}(\bx^{n} - \bz^{n}_i) - \bx^{n}]. \\
\end{split}
\end{equation*}
For the sake of clarity, we have adopted novel notation in this derivation; $\bx^{n}_i$ and $\bz^{n}_i$ denote the $n$th MM iterate of the $i$th primal and dual variables respectively.

Derivation of this algorithm and its Nesterov acceleration (FISTA) appear in the Appendix. The dual
updates, which are essentially projection steps, can be computed in parallel.  Thus, the dual algorithm matches the MM algorithm in this regard.

\subsubsection*{Projecting onto the set of doubly nonnegative matrices}

As a numerical example, consider the problem of projecting a symmetric matrix onto the set of doubly nonnegative matrices, namely the intersection of the set of nonnegative matrices with the set of positive semi-definite matrices. Many covariance matrices -- for example, kinship matrices in statistical genetics -- have nonnegative entries. Projection onto each of the component sets is relatively easy while projection onto the intersection is not. Projecting onto the set of nonnegative matrices is accomplished by setting all negative entries of a matrix to zero. Projecting onto the set of positive semi-definite matrices is accomplished by truncating the eigenvalue decomposition of the matrix and rejecting all outer products with negative eigenvalues.

As a test case, we generated a 200-by-200 matrix with independent and identically distributed (i.i.d.) entries drawn from a standard normal distribution. After projecting the simulated matrix onto the space of symmetric matrices, we compared 
the distance majorization algorithm to its quasi-Newton acceleration (2 secants), the dual proximal gradient algorithm, and its FISTA acceleration.  We implemented the MM algorithm with the geometrically increasing sequence $\mu_i = 2^i - 1$ of penalty constants $\mu$. The decision to switch to the next larger $\mu$ was based on the ratio
\begin{eqnarray}
\label{eq:stopping_rule}
\frac{\lVert \bx^{n+1} - \bx^{n} \rVert_2}{\lVert \bx^{n} \rVert_2 + 1}
\end{eqnarray}
Whenever this ratio fell below $\rho = 10^{-4}$, we updated $\mu$. To track the progress of each algorithm, we calculated two measures of constraint violation by the current matrix: (a) the absolute value of the most negative eigenvalue, and (b) the absolute value of the most negative entry. Figure~\ref{fig:dnn} plots the maximum of the two constraint violations on a log scale at each iteration. The abrupt transitions in the MM and quasi-Newton MM paths reflect the switch points for the penalty constant $\mu$. Obviously, the amount of work done in each iterate varies across the methods. For a more direct comparison, Table~\ref{tab:dnn} records several statistics, including run times in seconds.  In the table, the distance column conveys the Frobenius norm of the difference between the simulated matrix and the fitted matrix. The two featured algorithms perform about equally well. As expected, their accelerated versions do much better.

\begin{figure}
\centering
\includegraphics[scale=0.45]{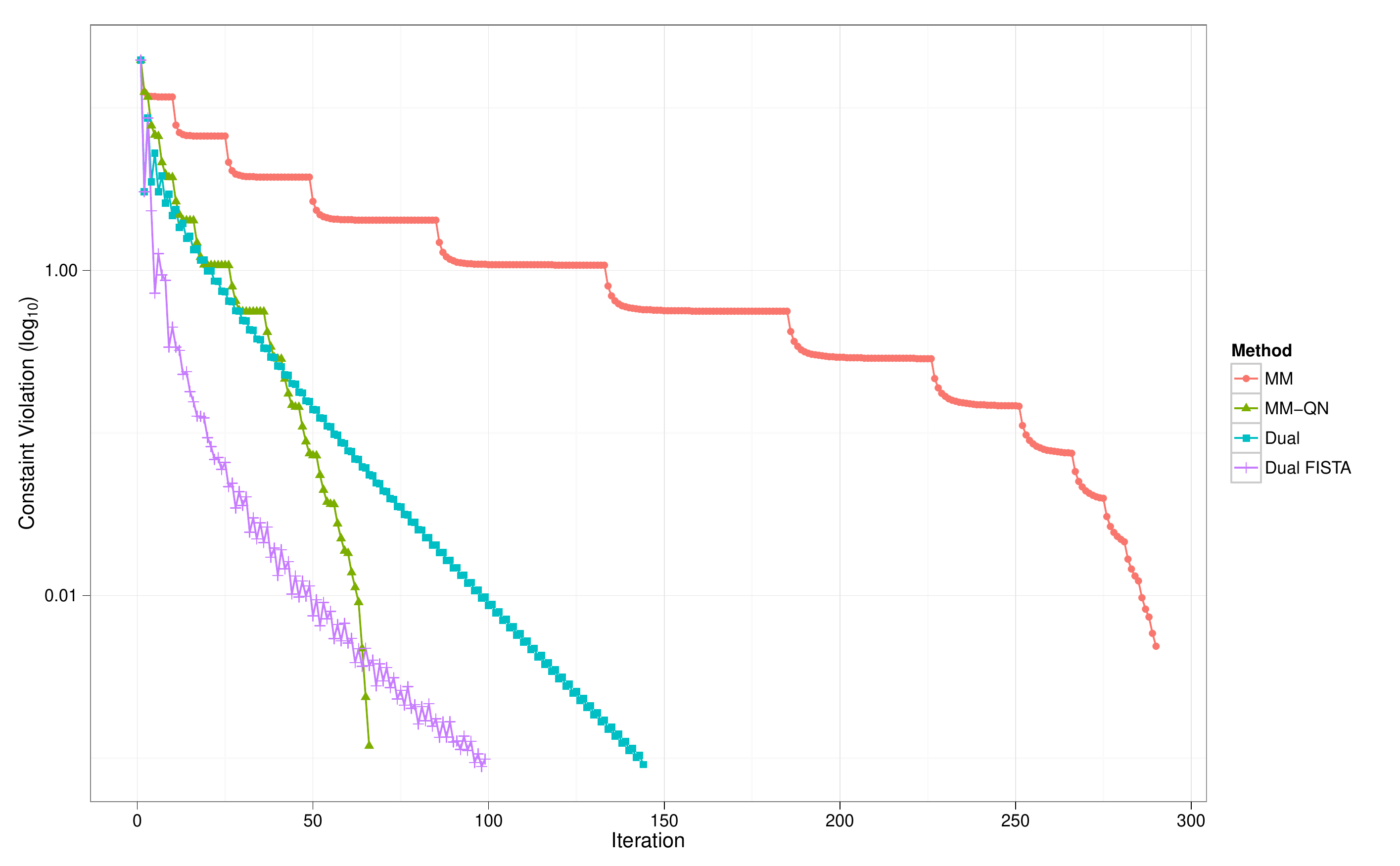}
\caption{A comparison of the MM algorithm, its quasi-Newton acceleration, the dual proximal gradient algorithm, and its FISTA acceleration applied to the problem of projecting a $200\times200$ matrix onto the set of doubly nonnegative matrices.
}\label{fig:dnn}
\end{figure}

\begin{table}[ht]
\caption{Timing comparisons and constraint violations for projecting onto the set of doubly nonnegative matrices.}
\label{tab:dnn}
\begin{center}
\begin{tabular}{lrrrr}
\hline\noalign{\smallskip}
 Method & Time (sec) & Iterations & Distance & Constraint Violation \\ 
\noalign{\smallskip}\hline\noalign{\smallskip}
 MM & 16.526 & 290 & 120.9110 & -0.0048710012 \\ 
 MM-QN & 11.098 & 98 & 120.9131 & -0.0007433297 \\ 
 Dual & 19.882 & 144 & 120.9131 & -0.0009122053 \\ 
 Dual (Acc.) & 13.926 & 99 & 120.9136 & -0.0009862162 \\ 
   \noalign{\smallskip}\hline
\end{tabular}
\end{center}
\end{table}

\subsubsection*{Shape-Restricted Regression}

Isotone regression minimizes the least squares criterion $\frac{1}{2}\sum_{i=1}^n w_i (y_i - x_i)^2$ subject to the isotonic constraint $x_1 \le \cdots \le x_n$.
This problem is readily amenable to the projection algorithm. Projection onto the isotone convex cone
\begin{eqnarray*}
C & = & \{\bx  : x_1 \le x_2 \le \cdots \le x_n\}
\end{eqnarray*}
is rapidly accomplished by the pool adjacent violators algorithm \cite{BarBarBre1972,RobWriDyk1988,SilSen2005}.   More complicated order restrictions such as $x_i \le x_j$ for all arcs $(i,j)$ in a directed graph can be handled as well.  In this setting all components of a vector $\bx$ projected on the convex set $C_{ij} = \{\bx: x_i \le x_j\}$ are left untouched except components $x_i$ and $x_j$, These are left untouched when $x_i \le x_j$. Both $x_i$ and $x_j$ are replaced by their average when $x_i > x_j$.

\begin{figure}
\centering
\includegraphics[scale=0.45]{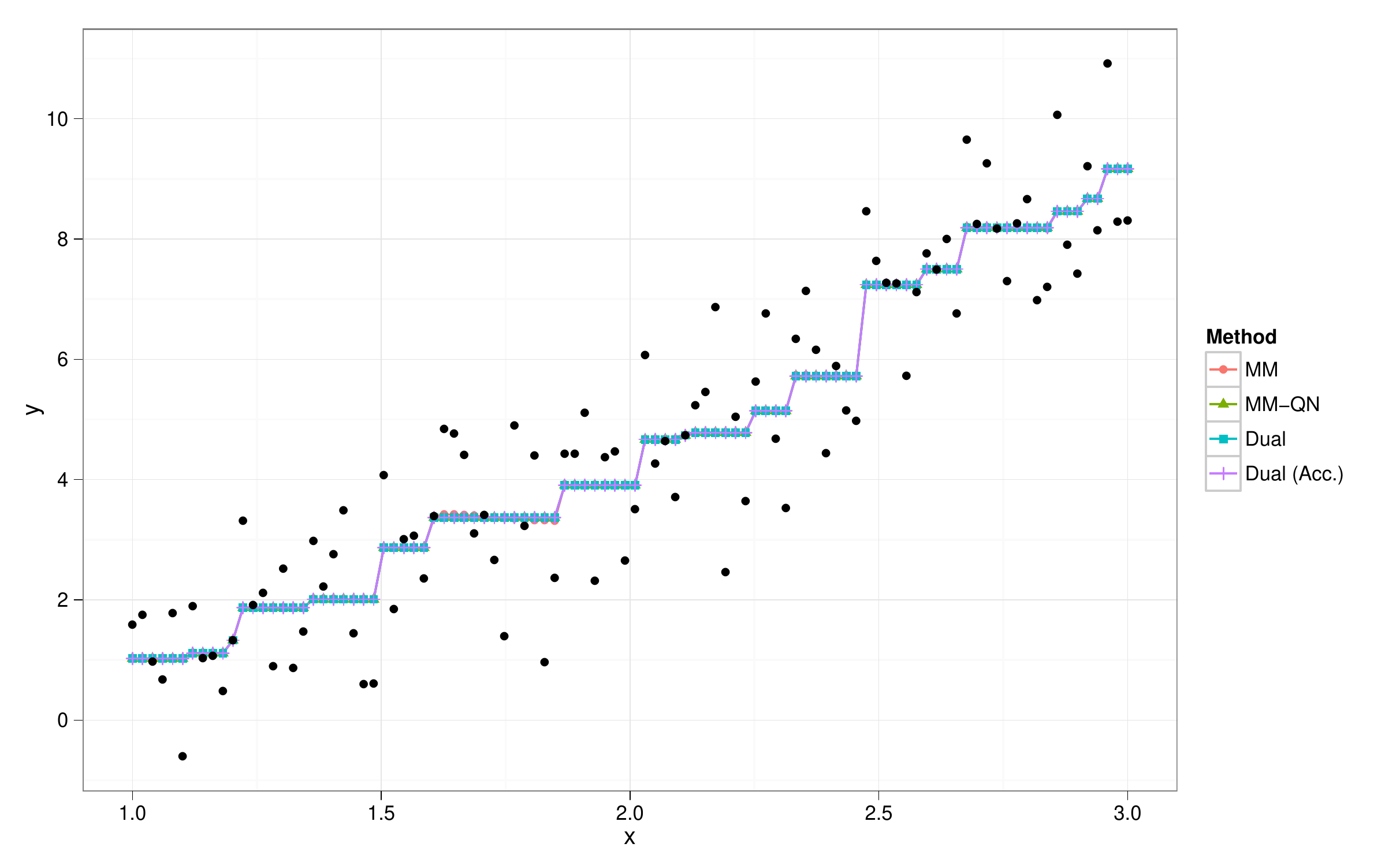}
\caption{Fitted data for isotonic regression.
\label{fig:isotone_data}}
\end{figure}

We considered the problem of fitting a nondecreasing function to the data shown in Figure~\ref{fig:isotone_data} (black dots). Each observed pair $(x_i,y_i)$ was generated as follows. The $x_i$ are equally spaced points between 1 and 3, and the $y_i$ satisfy
\begin{eqnarray*}
y_i & =  & x_i^2 + \epsilon_i,
\end{eqnarray*}
where the $\epsilon_i$ are i.i.d.\ standard normal deviates. For the MM algorithms we used the geometrically increasing sequence of penalty constants $\mu_i$ featured in the previous example and two secant conditions for the quasi-Newton acceleration.  We switched to the next value of $\mu$ whenever the stopping criterion (\ref{eq:stopping_rule}) fell below $\rho = 10^{-6}$. A looser threshold $\rho = 10^{-4}$ resulted in unacceptably poor fits for these data.

To track the progress of each algorithm, we measured the constraint violation of an iterate as the maximum absolute constraint violation between two successive parameters. Figure~\ref{fig:isotone_data} shows that all four algorithms return similar solutions under the specified stopping rule. Figure~\ref{fig:isotone} plots the constraint violation for each method on a log scale. Table~\ref{tab:isotone} compares timing results and constraint violations at convergence. In the table the distance column conveys the Euclidean norm of the difference between observed points and fitted points. Compared to the previous example, we see an even greater improvement in the performance in the accelerated versions of the two algorithms. In general, it is safe to conclude that distance majorization is a viable alternative to its most likely fastest competitor in non-smooth convex 
optimization.

\begin{figure}
\centering
\includegraphics[scale=0.45]{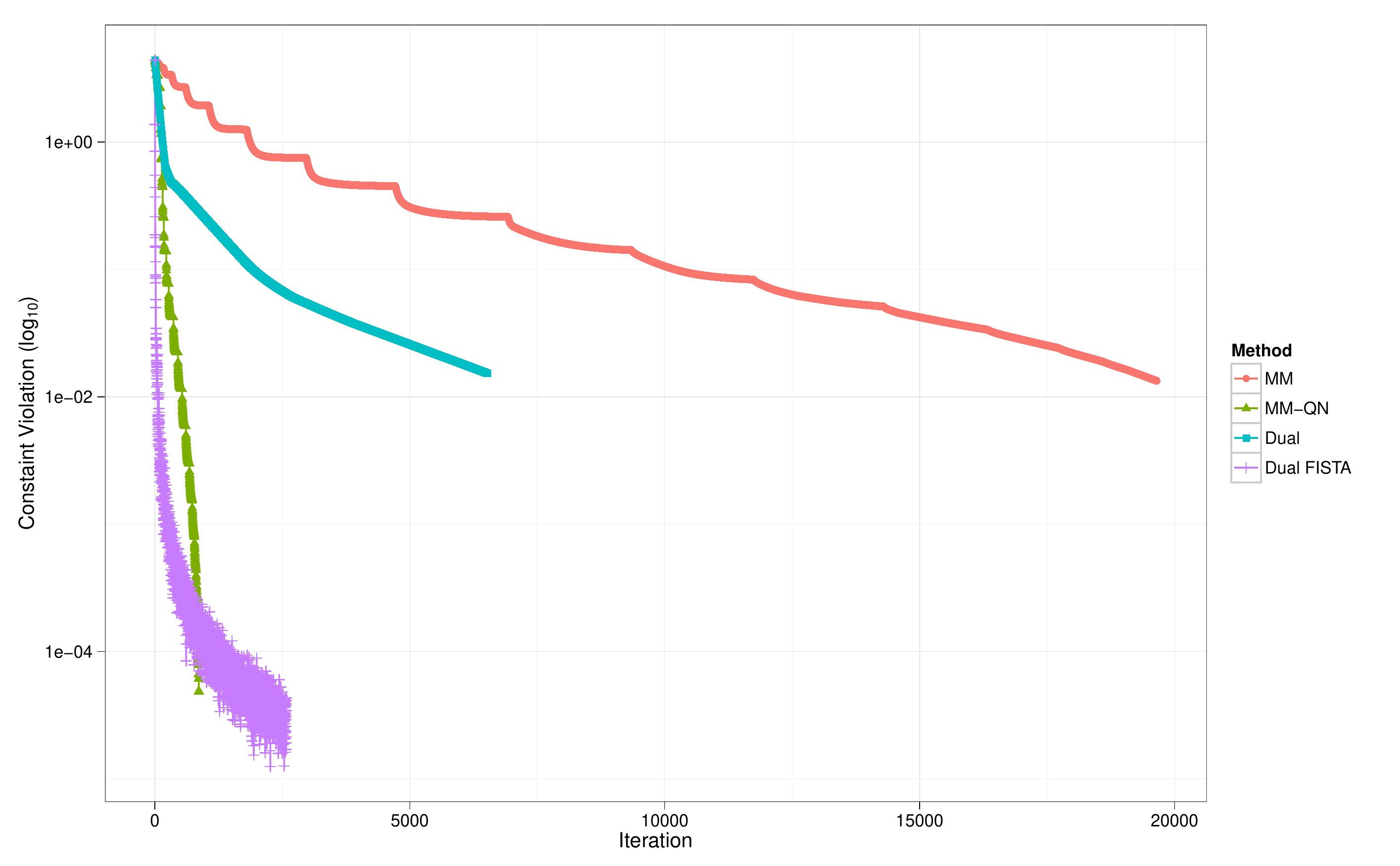}
\caption{A comparison of the MM algorithm, its quasi-Newton acceleration, the dual proximal gradient algorithm, and its FISTA acceleration applied to a univariate isotonic regression problem.
\label{fig:isotone}}
\end{figure}

\begin{table}[ht]
\caption{Timing comparisons and constraint violations for the isotonic regression example.}
\label{tab:isotone}
\begin{center}
\begin{tabular}{lrrrr}
\hline\noalign{\smallskip}
  Method & Time (sec) & Iterations & Distance & Constraint Violation \\ 
\noalign{\smallskip}\hline\noalign{\smallskip}
 MM & 24.45 & 19651 & 9.633144 & -1.330351e-02 \\ 
 MM-QN & 3.27 & 863 & 9.677731 & -4.869077e-05 \\ 
 Dual & 12.70 & 6526 & 9.637778 & -1.525531e-02 \\ 
 Dual (Acc.) & 5.46 & 2578 & 9.677847 & -4.104088e-05 \\ 
   \noalign{\smallskip}\hline
\end{tabular}
\end{center}
\end{table}

\subsubsection*{Least Squares Fitting with Convex Functions}

Given responses $y_i$, predictor vectors $\bx_i$ in $\Real^p$, and case weights $w_i$, convex regression seeks to minimize the sum of squares of residuals 
\begin{eqnarray*}
    \frac 12 \sum_{i=1}^n w_i (y_i - \theta_i)^2
\end{eqnarray*}
subject to the constraints $\bxi_i^t (\bx_j - \bx_i) \le \theta_j - \theta_i$ for every ordered pair $(i,j)$
\cite{BoyVan2004}. In effect, $\theta_i$ is viewed as the value of the regression function $\theta(\bx)$ at the point $\bx_i$. The unknown vector $\bxi_i \in \Real^p$ serves as a subgradient of $\theta(\bx)$ at $\bx_i$.  Because convexity is preserved by maxima, the formula
\begin{eqnarray*}
\theta(\bx) & = & \max_j \Big[\theta_j+\bxi_j^t (\bx - \bx_j) \Big]
\end{eqnarray*}
defines a convex function with value $\theta_i$ at $\bx=\bx_i$. In concave regression the opposite constraint inequalities are imposed.  Interpolation of predicted values in this model is accomplished by simply taking minima or maxima. Estimation reduces to a positive semidefinite quadratic program involving $n(p+1)$ variables and $n(n-1)$ inequality constraints. Note that the feasible region is nontrivial because it contains the point $(\btheta, \bXi) = ({\mathbf 0}, {\mathbf 0})$, where $\bXi = [\bxi_1, \ldots, \bxi_n]$.

The penalized objective function is
\begin{eqnarray*}
   f_\mu(\btheta, \bXi) = \frac 12 \sum_{i=1}^n w_i (y_i - \theta_i)^2 + \frac{\mu}{2} \sum_{j \ne k} d_{C_{jk}}^2(\btheta, \bXi),
\end{eqnarray*}
where $C_{jk} = \{(\btheta, \bXi): \bxi_k^t (\bx_j - \bx_k) \le \theta_j - \theta_k\}$. Let $P_{C_{jk}}(\btheta, \bXi)_i$ and $P_{C_{jk}}(\btheta, \bXi)^l$ denote the components of $P_{C_{jk}}(\btheta, \bXi)$ relevant to $\theta_i$ and $\bxi_l$, respectively.  The surrogate function
\begin{eqnarray*}
	g_\mu(\btheta, \bXi \mid \btheta_m, \bXi_m)
    &=& \frac 12 \sum_{i=1}^n w_i (y_i - \theta_i)^2 + \frac{\mu}{2} \sum_{i=1}^n \sum_{j \ne k} \|\theta_i - P_{C_{jk}}(\btheta_m, \bXi_m)_i\|_2^2  \\
    & & + \frac{\mu}{2} \sum_{l=1}^n \sum_{j \ne k} \|\bxi_l - P_{C_{jk}}(\btheta_m, \bXi_m)^l\|_2^2
\end{eqnarray*}
admits the minimizer
\begin{eqnarray*}
    \theta_{m+1,i} &=& \frac{w_i}{w_i + n(n-1)\mu} y_i + \frac{\mu}{w_i+n(n-1)\mu} \sum_{j \ne k} P_{C_{jk}}(\btheta_m, \bXi_m)_i  \\
    \bxi_{m+1,l} &=& [n(n-1)]^{-1} \sum_{j \ne k} P_{C_{jk}}(\btheta_m, \bXi_m)^l.
\end{eqnarray*}

The projection operator $P_{C_{jk}}$ is easy to compute because $C_{jk}$ is a half-space.
Furthermore, if we define the quantities
\begin{eqnarray*}
r_{jj} & = & 0 \quad \mbox{and} \quad 
    r_{jk} \:\;\, = \:\;\, \left[ \frac{(\bx_j-\bx_k)^t \bxi_{k} - \theta_{j}+\theta_{k}}{2 + \|\bx_j - \bx_k\|_2^2} \right]_+ \:\; \mbox{for} \quad j \ne k,
\end{eqnarray*}
then the sums entering the MM updates reduce to 
\begin{eqnarray*}
    \sum_{j \ne k} P_{C_{jk}}(\btheta, \bXi)_i &=& n(n-1) \theta_{i} + \sum_{k=1}^n r_{ik} - \sum_{j=1}^n r_{ji}    \\
    \sum_{j \ne k} P_{C_{jk}}(\btheta, \bXi)^l &=& n(n-1) \bxi_{l} - \sum_{j=1}^n r_{jl} (\bx_j - \bx_l)
\end{eqnarray*}
evaluated at $\btheta = \btheta_m$ and $\bXi = \bXi_m$.

Figure~\ref{fig:convex-reg} displays a randomly generated data set with 51 data points and the corresponding least squares fit with convexity constraints. We employed the same geometrically increasing sequence of $\mu$ used earlier, took five secant conditions for the quasi-Newton acceleration, and set the stopping criterion (\ref{eq:stopping_rule}) to $\rho=10^{-8}$. The MM algorithm requires 8940 iterations and 4.12 seconds in total to achieve the objective value of 1.0709 and the maximal constraint violation at order of $7 \times 10^{-9}$.

\begin{figure}
\centering
\includegraphics[width=4.5in]{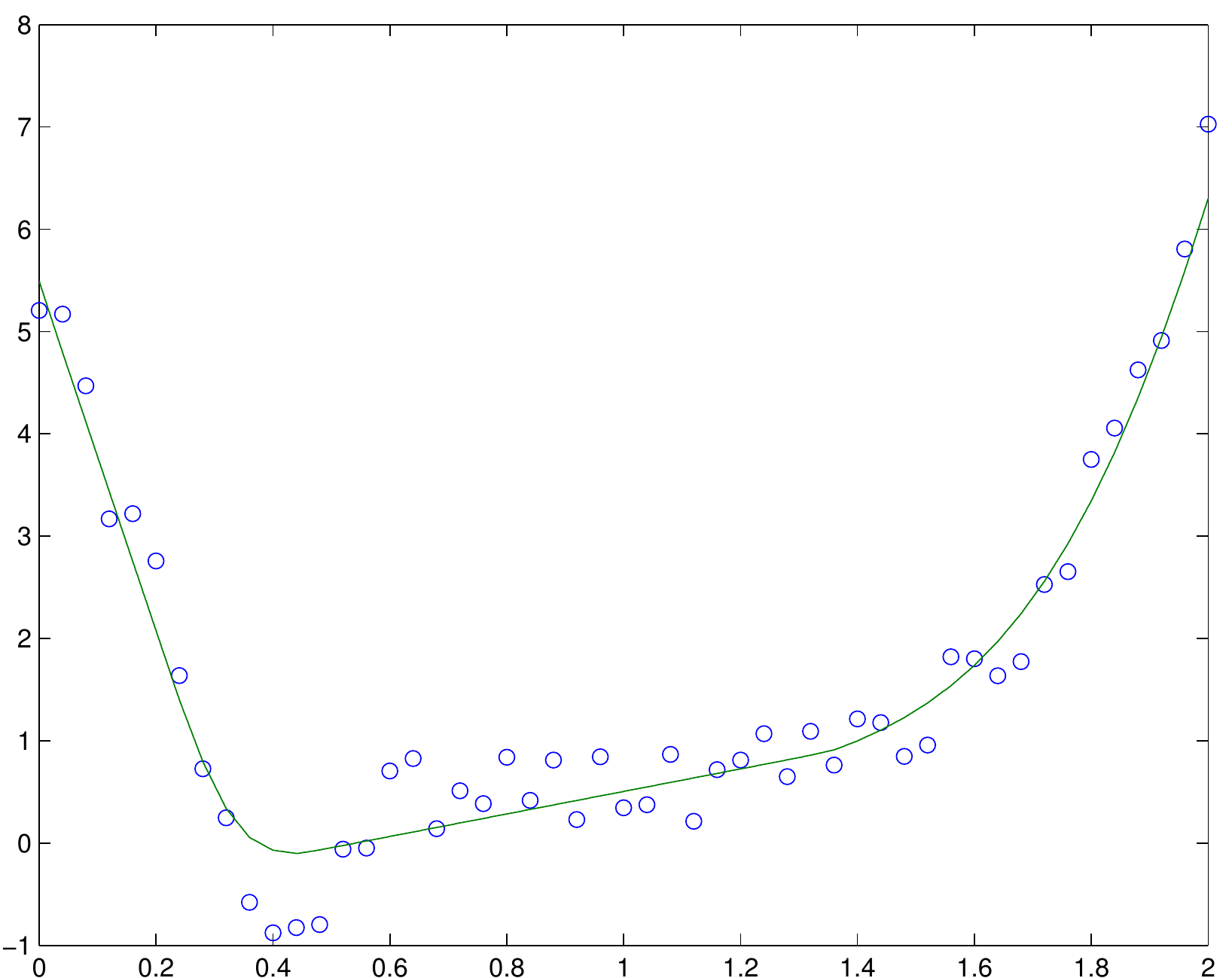}
\caption{Fitted data for convex regression.}
\label{fig:convex-reg}
\end{figure}

\subsubsection*{Support Vector Machine}

Given data $(y_i,\bx_i)$, $i=1,\ldots,n$, where $y_i \in \{-1,1\}$ and $\bx_i \in \Real^p$, the goal of discriminant analysis is to choose classification labels $y_i$ using the $p$-dimensional predictor $\bx_i$. The support vector machine (SVM) \cite{Vap2000} is one of the most popular classifiers and
potentially benefits from distance penalization.  Here the problem is to minimize
the quadratic loss function
\begin{eqnarray*}
f(\btheta,b,\bepsilon) & = & \sum_{i=1}^n \epsilon_i + \frac{\lambda}{2} \|\btheta\|^2
\label{classification_objective_function}
\end{eqnarray*}
subject to the inequality constraints
\begin{eqnarray*}
1 - y_i (b + \bx_{i}^t \btheta) & \le & \epsilon_i \label{hyperplane_separation}
\end{eqnarray*}
using slack variables $\epsilon_i \ge 0$. See Example 15.5.2 of the book \cite{Lan2012}
for further details about problem formulation and passing to the dual. In the following we assume that the first element of $\bx_i$ is 1, and thus the intercept $b$ is absorbed in the parameter $\btheta$. Then the penalized objective function is
\begin{eqnarray*}
    f_\mu(\bepsilon,\btheta) = \sum_{i=1}^n  \epsilon_i + \frac{\lambda}{2} \|\btheta\|_2^2 + \frac{\mu}{2} \sum_{j=1}^n d_{C_j}^2(\bepsilon,\btheta)
\end{eqnarray*}
where $C_j = \{(\bepsilon,\btheta): \epsilon_j+y_j \bx_j^t \btheta \ge 1\}$. Minimizing the surrogate function
\begin{eqnarray*}
    g_\mu(\bepsilon,\btheta|\bepsilon_m, \btheta_m) 
    &=& \sum_{i=1}^n  \epsilon_i+\frac{\lambda}{2} \|\btheta\|_2^2 + \frac{\mu}{2} \sum_{j=1}^n \|\bepsilon - P_{C_j}(\bepsilon_m,\btheta_m)_{\bepsilon} \|_2^2 \\
& & + \frac{\mu}{2} \sum_{j=1}^n \|\btheta - P_{C_j}(\bepsilon_m,\btheta_m)_{\btheta}\|_2^2
\end{eqnarray*}
subject to the non-negativity of $\epsilon_i$ yields the next iterate
\begin{eqnarray*}
    \bepsilon_{m+1} &=& \frac 1n \left[ \sum_{j=1}^n P_{C_j}(\bepsilon_m,\btheta_m)_{\bepsilon} - \mu^{-1} {\bf 1}_n \right]_+  \\
    \btheta_{m+1} &=& \frac{\mu}{\lambda+n\mu} \sum_{j=1}^n P_{C_j}(\bepsilon_m,\btheta_m)_{\btheta}.
\end{eqnarray*}
Because $C_{j}$ is a half-space,
\begin{eqnarray*}
    P_{C_j}(\bepsilon_m,\btheta_m) = \begin{pmatrix}
    \bepsilon_m \\ \btheta_m
    \end{pmatrix} + \left[\frac{1 - \epsilon_{mj} - y_j\bx_j^t \btheta_m }{1 + y_j^2\|\bx_j\|_2^2} \right]_+
\begin{pmatrix}
    \be_j \\ y_j \bx_j
    \end{pmatrix},
\end{eqnarray*}
where the vector $\be_j$ has all entries equal to 0 except for a 1 at entry $j$.

We report the results on an example SVM problem with a training data set of $n=1371$ observations and $p=7$ features. We employed the same geometrically increasing sequence of $\mu$ and the same stopping criterion $\rho$ used in the previous example. At $\lambda = 10$, the MM algorithm takes 14,432 iterations and 2.69 seconds to achieve the objective value 489.0058 and the maximal constraint violation $8.6 \times 10^{-9}$.

As a generalization, consider the kernel SVM \cite{SchSmo2001} attractive in handling $p >> n$ problems. The optimization problem is to minimize
\begin{eqnarray*}
    \sum_{i=1}^n \epsilon_i + \frac{\lambda}{2} \sum_{i=1}^n \sum_{j=1}^n \theta_i \theta_j K(\bx_i,\bx_j)
\end{eqnarray*}
subject to
\begin{eqnarray*}
    1 - y_i \left[ b + \sum_{j=1}^n \theta_j K(\bx_i,\bx_j) \right] \le \epsilon_i \quad \mbox{and} \quad \epsilon_i \ge 0 \quad \mbox{ for all } i.
\end{eqnarray*}
Common choices of kernels include the polynomial kernel
\begin{eqnarray*}
    K(\bx_i,\bx_j) = \langle \bx_i ,\bx_j \rangle^\gamma
\end{eqnarray*}
and the Gaussian kernel
\begin{eqnarray*}
    K(\bx_i,\bx_j) = \exp \left\{ - \frac{\|\bx_i - \bx_j\|_2^2}{2\sigma^2} \right\}.
\end{eqnarray*}
Since $K$ is positive semi-definite, it can be expressed in terms of a Cholesky decomposition $K = LL^t$. With reparameterization $\balpha = L^t \btheta$, the problem transforms to
\begin{eqnarray*}
    \min_{b,\bepsilon,\balpha} \quad \sum_{i=1}^n \epsilon_i + \frac{\lambda}{2} \|\balpha\|_2^2
\end{eqnarray*}
subject to
\begin{eqnarray*}
    1 - y_i \left( b + \sum_{j=1}^n l_{ij} \alpha_j \right) \le \epsilon_i \quad \mbox{and} \quad \epsilon_i \ge 0 \quad \mbox{ for all } i,
\end{eqnarray*}
which is essentially the same as the original SVM. The Cholesky decomposition costs $n^3/6$ flops and might be a concern for data with huge number of observations. Some kernels used in genomics are naturally low rank with trivial Cholesky factors $L$ and $L^t$. Even for a full-rank kernel $K$, one can resort to the fast Lanczos algorithm \cite{GolVan1996} to extract its top $r$ eigen-pairs $K \approx U_r D_r U_r^t$ and set $L = U_r D_r^{1/2}$, an $n \times r$ matrix.

\subsection*{The Fire Station Problem}

Finally, we give another example that distance majorization need not be fettered to Euclidean distances.
Indeed, Euclidean distances may be inappropriate in some problems. Consider the problem of determining the optimal location of a new fire station in a city where the streets occur on a rectangular grid.  The station should be situated to guarantee the shortest routes to several major buildings spread throughout the city.  This is just the generalized Heron problem with the $\ell_1$ norm substituting for the Euclidean norm \cite{ChiLan2013}. More general treatment of the problem under arbitrary norms and infinite dimensions can be found in \cite{MorNam2011,MorNamSal2012,MorNamSal2011}. Here we are concerned with efficient computation with a particular norm. The projection operators $P^1_C(\bx)$ are now harder to calculate. Indeed, they are often sets
rather than points. Fortunately, when $C$ is a rectangle $[\ba,\bb]$ with sides parallel to the standard axes, $P^1_C(\bx)$ is a point with components
\begin{eqnarray*}
P^1_C(\bx)_i & = &
\begin{cases}
a_i & x_i < a_i \\ x_i & a_i \le x_i \le b_i \\ b_i & x_i > b_i.
\end{cases}
\end{eqnarray*}
To minimize the objective function, we minimize the surrogate function 
\begin{eqnarray*}
g(\bx \mid \bx_{n}) & = & \sum_{i=1}^m \sum_{j=1}^p | x_j - P^1_{C_i}(\bx_n)_j | .
\end{eqnarray*}
Because the $\ell_1$ norm separates variables, we obtain a very simple update formula.
\begin{eqnarray*}
x_{n+1,j} & = & \operatorname{median}\left[P^1_{C_1}(\bx_{n})_j, \ldots, P^1_{C_m}(\bx_{n})_j \right].
\end{eqnarray*}

Consider the example where the buildings have centers $(-7,0.5)$, $(-5,-8)$, $(4,7)$, $(5,2)$, and $(-4,6)$
and half-side lengths of 0.5.  Minimizing the sum of $\ell_1$ and $\ell_2$ distances yields the results
shown in Figure~\ref{fig:fireHouse}. The optimal position clearly depends on the underlying
norm.  For more general $\ell_1$ problems, the solution may not be unique because the projection
operator does not reduce to a single point.

\begin{figure}
\centering
\begin{tabular}{cc}
\subfloat[$\ell_1$ norm]{\includegraphics[scale=0.45]{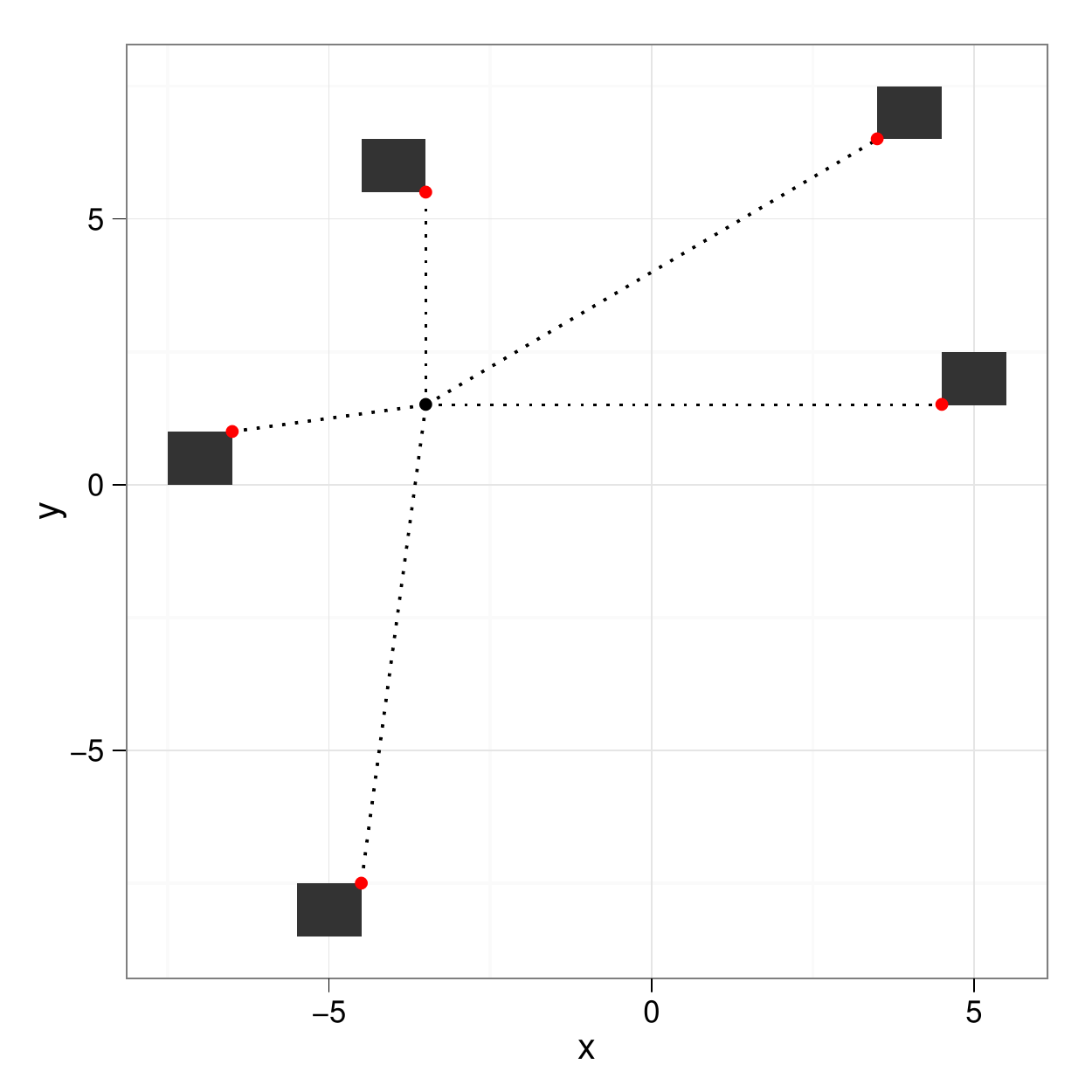}} 
& 
\subfloat[$\ell_2$ norm]{\includegraphics[scale=0.45]{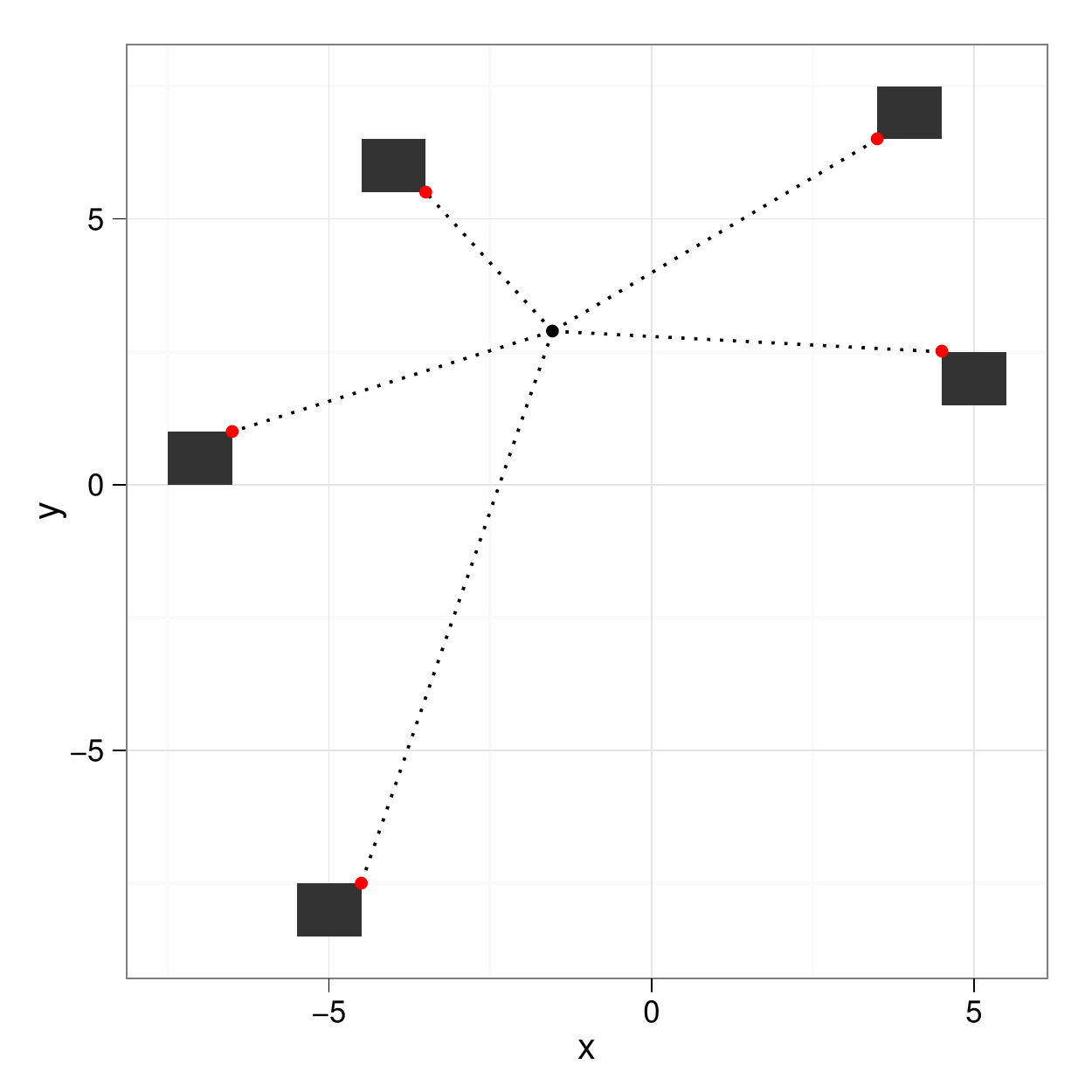}}\\
\end{tabular}
\caption{Optimal location for the fire station.}
\label{fig:fireHouse}
\end{figure}

\section{Convergence Analysis}

We now prove convergence of the distance majorization algorithm under conditions pertinent to Euclidean distances. For broader impact,  we relax the convexity requirement on $\ell(\bx)$. For example, in statistics, the objective function corresponding to many widely used robust estimators are often not-convex \cite{HubRon2009}. Some of our convergence results hold for such objective functions. When $\ell(\bx)$ is convex, it is possible to prove stronger results, and we comment on what changes when convexity is assumed. Let us first consider the convergence of the MM algorithm
for solving subproblem (\ref{eq:quadratic_penalty}).
The convergence theory of MM algorithms hinges on the properties of the algorithm map $\psi(\bx) \equiv \arg\min_{\by} g(\by \mid \bx)$. 
For easy reference, we state a simple version of Meyer's monotone convergence theorem \cite{Mey1976} instrumental in proving convergence in our setting.

\begin{proposition}\label{prop:MM_limit_points}
  Let $f(\bx)$ be a continuous function on a domain $S$ and
   $\psi(\bx)$ be a continuous algorithm map from $S$ into $S$ satisfying
 $f(\psi(\bx)) < f(\bx)$ for all $\bx \in S$ with $\psi(\bx) \neq \bx$.
  Suppose for some initial point $\bx_{0}$ that the set
\begin{eqnarray*}
\mathcal{L}_f(\bx_{0}) & \equiv & \{\bx \in S : f(\bx) \leq f(\bx_{0}) \}
\end{eqnarray*}
is compact. Then
  \begin{inparaenum}[(a)]
   \item \label{part:successive_iterates}
     $\lim_{m \to \infty} \lVert \bx_{m+1} - \bx_{m} \rVert = 0$, 
  \item \label{part:fixed_points} 
    all cluster points are fixed points of $\psi(\bx)$, and
  \item \label{part:overall_convergence}
    $\bx_m$ converges to one of the fixed points if they are finite in number.
   \end{inparaenum}
\end{proposition}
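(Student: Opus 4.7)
My plan is to exploit three ingredients in combination: the monotone decrease of $\{f(\bx_m)\}$, the compactness of $\mathcal{L}_f(\bx_0)$, and the continuity of both $\psi$ and $f$. The starting observation is that $f(\psi(\bx)) \leq f(\bx)$, with equality exactly when $\bx$ is a fixed point of $\psi$, so every iterate stays in $\mathcal{L}_f(\bx_0)$. Continuity of $f$ on the compact set $\mathcal{L}_f(\bx_0)$ supplies a lower bound, so $f(\bx_m)$ decreases to some finite limit $f^\star$, and every convergent subsequence $\bx_{m_k} \to \bx^\star$ satisfies $f(\bx^\star) = f^\star$.

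For part (b) I would pick any cluster point $\bx^\star$ with $\bx_{m_k} \to \bx^\star$ and use continuity of $\psi$ to deduce $\bx_{m_k+1} = \psi(\bx_{m_k}) \to \psi(\bx^\star)$; continuity of $f$ then forces $f(\psi(\bx^\star)) = f^\star = f(\bx^\star)$, and the strict-descent hypothesis immediately gives $\psi(\bx^\star) = \bx^\star$. Part (a) follows from a parallel contradiction. If $\|\bx_{m+1} - \bx_m\|$ fails to vanish, compactness yields a subsequence $\bx_{m_k} \to \bx^\star$ with $\|\psi(\bx_{m_k}) - \bx_{m_k}\| \geq \epsilon > 0$, and continuity passes this inequality to $\|\psi(\bx^\star) - \bx^\star\| \geq \epsilon$, contradicting the fixed-point property of cluster points just established.

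The main obstacle is part (c): even after we know every cluster point lies in the finite set $F$ of fixed points, the sequence could in principle oscillate between several of them. The remedy is to combine (a) with the positive lower bound $\delta = \min\{\|\bp - \bq\| : \bp, \bq \in F,\ \bp \neq \bq\}$. First, I would show that for every $\epsilon > 0$ the iterates eventually lie in $U_\epsilon = \bigcup_{\bp \in F} B(\bp, \epsilon)$; otherwise a subsequence outside $U_\epsilon$ would, by compactness, cluster at a point that by (b) must lie in $F$, which is impossible. Next, choosing $\epsilon < \delta/4$, part (a) makes $\|\bx_{m+1} - \bx_m\|$ eventually smaller than $\delta/2$, so the iterates cannot jump between disjoint balls $B(\bp, \epsilon)$ and $B(\bq, \epsilon)$ with $\bp \neq \bq$. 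Hence from some index on, the sequence is trapped in a single $B(\bp, \epsilon)$, and since $\epsilon$ is arbitrary, $\bx_m$ converges to that particular $\bp$.
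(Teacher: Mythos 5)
Your proof is correct. Note that the paper itself does not prove this proposition: it is stated as ``a simple version of Meyer's monotone convergence theorem'' with a citation to Meyer (1976), so there is no in-paper argument to compare against; your write-up is essentially the standard self-contained proof of that result. The descent inequality keeps all iterates in the compact set $\mathcal{L}_f(\bx_0)$, monotonicity plus continuity give $f(\bx_m)\downarrow f^\star$, and continuity of $\psi$ together with strict descent at non-fixed points identifies every cluster point as a fixed point; your contradiction argument for (a) and the trapping argument for (c) (eventual containment in $\bigcup_{\bp}B(\bp,\epsilon)$, steps eventually shorter than $\delta/2$, hence confinement to a single ball) are sound, including the closing observation that the trapping center cannot change with $\epsilon$ because balls of radius less than $\delta/4$ around distinct fixed points are disjoint. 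Two minor remarks: proving (b) before (a) is fine since (b) does not use (a), and part (c) is often dispatched instead via the classical Ostrowski-type lemma that when successive differences vanish the cluster set of a sequence in a compact set is connected, so a finite cluster set is a singleton; your direct $\epsilon$-ball argument is an equally valid and more elementary substitute for that lemma.
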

The function $f(\bx)$ is the objective to be minimized. In our context, the objective function is $f_\mu(\bx) = \ell(\bx) + \frac{\mu}{2}\sum_{i=1}^m \dist(\bx,C_i)^2$. 
We make the following assumptions: \begin{inparaenum}[(a)]
\item \label{part:C1} $\ell(\bx)$ is continuously differentiable and $\ell(\bx) + \frac{\kappa}{2} \| \bx \|^2$ is convex for some constant $\kappa > 0$,
\item \label{part:f_coercive} $f_{\kappa}(\bx)$ is coercive in the sense that $\lim_{\|\bx\| \to \infty} f_{\kappa}(\bx) = \infty$, and
\item \label{part:mu_big} $\mu > \kappa$.
\end{inparaenum}
Note that $f_{\mu}(\bz)$ inherits coerciveness from $f_{\kappa}(\bz)$.
Assumption (\ref{part:f_coercive}) is met in several different scenarios, for example, if at least one of the $C_i$ is bounded or if $\ell(\bx)$ itself is coercive. When $\ell(\bx)$ is convex, $\ell(\bx) + \frac{\kappa}{2} \| \bx \|^2$ is convex for any $\kappa > 0$. Consequently, assumption (\ref{part:mu_big}) holds for any positive $\mu$. If $f(\bx)$ is non-convex, but the smallest eigenvalue of the Hessian $d^2f(\bx)$ is bounded below by 
$\lambda$, then one can take $\kappa = -\lambda$. As a rule, it can be challenging to identify $\kappa$ in advance, and consequently  in practice we would not know how large to choose  $\mu$ to ensure the conditions for convergence when $\kappa$ is unknown. Nonetheless, $\kappa$ can be explicitly determined in many useful cases. In the Appendix, we derive $\kappa$ for the classic Tukey biweight of robust estimation. 

\begin{proposition}\label{prop:subprob_convergence}
The cluster points of the MM iterates for solving subproblem (\ref{eq:quadratic_penalty}) are stationary points  of $f_\mu(\bx)$ under assumptions (\ref{part:C1}) through (\ref{part:mu_big}) above. If the number of stationary points is finite, then the MM iterates converge. Finally, if $f_\mu(\bx)$ has a unique stationary point, then the MM iterates converge to that stationary point, which globally minimizes $f_\mu(\bx)$.
\end{proposition}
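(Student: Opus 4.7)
The plan is to invoke the monotone convergence theorem stated in Proposition~\ref{prop:MM_limit_points} with objective $f_\mu$ and algorithm map $\psi(\bx) = \arg\min_{\by} g(\by \mid \bx)$, where $g(\by \mid \bx) = \ell(\by) + \frac{\mu}{2}\sum_{i=1}^m \|\by - P_{C_i}(\bx)\|_2^2$ is the distance majorization surrogate. This reduces matters to four checks: continuity of $f_\mu$, single-valuedness and continuity of $\psi$, strict descent of $f_\mu$ along nontrivial updates, and compactness of some sublevel set of $f_\mu$. Two extra observations then finish the proof: every fixed point of $\psi$ is a stationary point of $f_\mu$, and under coerciveness the infimum of $f_\mu$ is attained at a stationary point.

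Continuity of $f_\mu$ is immediate from assumption (a) because $\ell$ is $C^1$ and each $\dist(\,\cdot\,, C_i)^2$ is $C^1$ with gradient $2(\bx - P_{C_i}(\bx))$. For $\psi$, the crucial observation is that assumption (a) makes $\ell(\by) + \tfrac{\kappa}{2}\|\by\|_2^2$ convex, so the surrogate $g(\,\cdot\,\mid \bx)$, whose penalty term contributes a Hessian in $\by$ that dominates $\mu \bI$ in the Loewner order, is strongly convex in $\by$ with modulus at least $\mu - \kappa > 0$ by assumption (c). Strong convexity gives a unique minimizer $\psi(\bx)$; joint continuity of $g(\by \mid \bx)$ in $(\by,\bx)$ combined with uniform strong convexity in $\by$ then yields continuity of $\psi$ via Berge's maximum theorem. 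Strict descent follows from tangency $g(\bx \mid \bx) = f_\mu(\bx)$ and the strictness of the strongly convex minimum: if $\psi(\bx) \ne \bx$, then $f_\mu(\psi(\bx)) \le g(\psi(\bx) \mid \bx) < g(\bx \mid \bx) = f_\mu(\bx)$.

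Compactness of the sublevel set $\mathcal{L}_{f_\mu}(\bx_0)$ follows from the pointwise bound $f_\mu \ge f_\kappa$, valid because $\mu \ge \kappa > 0$ and the distance-squared terms are nonnegative; assumption (b) then transfers coerciveness from $f_\kappa$ to $f_\mu$, and every sublevel set of the continuous function $f_\mu$ is closed and bounded. A fixed point $\bx$ of $\psi$ satisfies $\nabla_\by g(\bx \mid \bx) = \mathbf{0}$, namely $\nabla \ell(\bx) + \mu \sum_i (\bx - P_{C_i}(\bx)) = \mathbf{0}$, which is precisely $\nabla f_\mu(\bx) = \mathbf{0}$; hence fixed points of $\psi$ coincide with stationary points of $f_\mu$. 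Applying Proposition~\ref{prop:MM_limit_points} then delivers the first two assertions. For the final claim, coerciveness and continuity of $f_\mu$ ensure its infimum is attained at some stationary point; if the stationary set is a singleton this point must be the unique global minimizer, and the iterates converge to it by the previous clause. The main delicate step is establishing single-valuedness and continuity of $\psi$ through the strong convexity modulus $\mu - \kappa$; the coerciveness transfer and the fixed-point/stationary-point identification are both routine once strong convexity is in hand.
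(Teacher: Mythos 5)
Your proposal is correct and follows the same overall architecture as the paper's proof: reduce to Meyer's monotone convergence theorem (Proposition~\ref{prop:MM_limit_points}) applied to $f_\mu$ and the map $\psi$, establish strong convexity of $g_\mu(\cdot \mid \bx)$ with modulus $\mu - \kappa$ from assumption (\ref{part:C1}), transfer coerciveness from $f_\kappa$ to $f_\mu$ to get compact sublevel sets, identify fixed points of $\psi$ with stationary points of $f_\mu$ via $\nabla f_\mu(\bx) = \nabla g_\mu(\bx \mid \bx)$, and finish the uniqueness claim by attainment of the minimum. The one place where you genuinely diverge is the continuity of $\psi$: the paper proves it by hand, taking $\bx_n \to \bx$, using the descent property plus coerciveness to bound $\by_n = \psi(\bx_n)$, and passing to the limit in the stationarity condition ${\mathbf 0} = \nabla \ell(\by_{n_k}) + \mu\sum_i[\by_{n_k} - P_{C_i}(\bx_{n_k})]$ to conclude every subsequential limit is the unique stationary point of $g_\mu(\cdot \mid \bx)$. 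You instead cite Berge's maximum theorem with joint continuity of $g_\mu$ (which holds, since the projections $P_{C_i}$ are continuous) and uniform strong convexity in $\by$. That route works, but as usually stated Berge requires a compact-valued continuous feasible correspondence, whereas here the minimization is over all of $\Real^p$; to make the citation airtight you should add the (easy) observation that the uniform modulus $\mu-\kappa$ forces $\psi$ to be locally bounded near any $\bx$, so the minimization can be restricted to a fixed compact set in a neighborhood of $\bx$ before invoking the theorem --- or, alternatively, note that the same strong convexity inequality applied at $\psi(\bx_1)$ and $\psi(\bx_2)$ yields a direct quantitative continuity estimate, bypassing Berge entirely. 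The paper's subsequence argument buys a self-contained proof using only continuity of $\nabla\ell$ and of the projections; your version is slightly shorter once the compactness patch is supplied. All remaining steps (strict descent from uniqueness of the surrogate minimizer, the bound $f_\mu \geq f_\kappa$ under $\mu > \kappa$, and the endgame for finitely many or a unique stationary point) match the paper.
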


\begin{proof}
We first argue that the surrogate function $g_{\mu}(\by \mid \bx)$ is strongly convex, a crucial fact invoked later. For all $\bx, \by,$ and $\bz$, Assumption (\ref{part:C1}) implies
\begin{eqnarray*}
\ell(\by) + \frac{\kappa}{2} \| \by \|^2 &\geq \ell(\bz) + \frac{\kappa}{2}\| \bz \|^2 + [\nabla \ell(\bz) + \kappa \bz]^t(\by - \bz),\\
\end{eqnarray*}
which in turn entails
\begin{equation}
\begin{split}
\label{eq:ell_convex}
\ell(\by) &\geq \ell(\bz) + \nabla \ell(\bz)^t (\by - \bz) + \kappa \left [\frac{1}{2}\| \bz \|^2 + \bz^t(\by - \bz) - \frac{1}{2} \| \by \|^2 \right ]\\
 &= \ell(\bz) + \nabla \ell(\bz)^t (\by - \bz) - \frac{\kappa}{2} \| \bz - \by \|^2. \\
\end{split}
\end{equation}
The quadratic expansion
\begin{equation}
\begin{split}
\label{eq:quad_expansion}
\| \by - P_{C_i}(\bx) \|^2 &= \| \by - \bz + \bz - P_{C_i}(\bx) \|^2 \\
&= \| \bz - P_{C_i}(\bx) \|^2 + 2[\bz - P_{C_i}(\bx)]^t(\by - \bz) + \| \by - \bz \|^2 \\
\end{split}
\end{equation}
also holds. Combining inequality (\ref{eq:ell_convex}) with equality (\ref{eq:quad_expansion}) leads to
\begin{eqnarray*}
g_\mu(\by \mid \bx) \geq g_\mu(\bz \mid \bx) + \nabla g_\mu(\bz \mid \bx)^t (\by -\bz) + \frac{\mu-\kappa}{2} \| \bz - \by \|_2^2. \\
\end{eqnarray*}
which is equivalent to the strong convexity of $\by \mapsto g_\mu(\by \mid \bx)$. In view of this result, $\by \mapsto g_\mu(\by \mid \bx)$ has a single stationary point, which is also its unique global minimizer.

We now proceed to check the conditions given in Proposition~\ref{prop:MM_limit_points}. It is easy to verify that $f_\mu(\bx)$ is continuous. We must also show that the algorithm map $\psi(\bx)$ is continuous. Take an arbitrary convergent sequence $\bx_n$ that tends to the limit $\bx$. It suffices to prove that the sequence $\by_n = \psi(\bx_n)$ tends to $\by = \psi(\bx)$. Now there exists a constant $b$ such that 
\begin{eqnarray*}
f_\mu(\bx_n) & \leq & f_\mu(\bx) + b
\end{eqnarray*}
for all $\bx_n$. In view of the descent property, we have $f_\mu(\by_n) \leq f_\mu(\bx) + b$ as well. Hence, coerciveness implies $\by_n$ is bounded. Consider any convergent subsequence $\by_{n_k}$ with limit $\bz$.
The points $\by_{n_k}$ and $\bx_{n_k}$ are related through the stationarity condition
\begin{eqnarray*}
{\mathbf 0} = \nabla \ell(\by_{n_k}) + \mu\sum_{i=1}^m [\by_{n_k} - P_{C_i}(\bx_{n_k})].
\end{eqnarray*}
Since $\ell(\bx)$ is continuously differentiable and Euclidean projections are continuous functions, taking limits gives,
\begin{eqnarray*}
{\mathbf 0} = \nabla \ell(\bz) + \mu \sum_{i=1}^m [\bz - P_{C_i}(\bx)] = \nabla g_\mu(\bz \mid \bx).
\end{eqnarray*}
Because the surrogate function $\bz \mapsto g_\mu(\bz \mid \bx)$ possesses a unique stationary point $\by$, the subsequence $\by_{n_k}$ converges to $\by$. Given this conclusion for all subsequences of the bounded sequence $\by_n$, the sequence $\by_n$ in fact converges to $\by$.

The strict descent property of $\psi(\bx)$ follows from the uniqueness of the global minimizer of $g_\mu(\by \mid \bx)$. Because $f_\mu(\bx)$ is coercive and continuous (in fact, continuously differentiable), the set $\mathcal{L}_{f_\mu}(\bx_0)$ is compact for any initial point $\bx_0$. Therefore, Proposition~\ref{prop:MM_limit_points} implies that all cluster points of the sequence $\bx_{n+1} = \psi(\bx_n)$ are fixed points.  Since $\nabla f_\mu(\bx) = \nabla g_\mu(\bx \mid \bx)$, fixed points coincide with stationary points of $f_\mu(\bx)$. If $f_\mu(\bx)$ has finitely many stationary points, conclusion (\ref{part:overall_convergence}) of Proposition~\ref{prop:MM_limit_points} implies that the iterates converge to one of the stationary points. If the coercive function $f_\mu(\bx)$ possesses a single stationary point, then that point represents a global minimum, and the MM iterates $\bx_n$ converge to it. 
\end{proof}

Observe that Proposition~\ref{prop:subprob_convergence} does not explicitly require the loss function $\ell(\bx)$ to be convex. This is in sharp contrast to the strong convexity condition on $\ell(\bx)$ needed to ensure the global convergence of the dual ascent algorithm. The convergence of the dual ascent method is discussed further in the Appendix. For a sequence of penalization parameters $\mu_k \uparrow \infty$, we intuitively expect the solutions to the penalized problems to approach a solution to  the original problem. Indeed, this is the case. We restate Theorem 17.1 in \cite{NocWri2006} in our notation.
\begin{proposition}
\label{prop:Quadratic_Penalty_Method}
Suppose each $\bx(\mu_k)$ exactly solves subproblem (\ref{eq:quadratic_penalty}), and that $\mu_k \uparrow \infty$. Then every cluster point of the sequence $\bx(\mu_k)$ is a global solution to the original problem (\ref{eq:opt_prob}).
\end{proposition}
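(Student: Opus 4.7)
The plan is to compare the penalized objective at the iterates $\bx(\mu_k)$ with its value at a global solution of the original constrained problem. Let $\bx^{\star}$ denote any global minimizer of (\ref{eq:opt_prob}); such a point exists since we may assume the intersection $\cap_i C_i$ is nonempty and the cluster point assumption implicitly yields a feasible region on which $\ell$ attains its infimum in the relevant examples. Because $\bx^{\star}\in\cap_i C_i$, every distance term $\dist(\bx^{\star},C_i)$ vanishes, so $f_{\mu_k}(\bx^{\star})=\ell(\bx^{\star})$. The defining optimality of $\bx(\mu_k)$ then gives the key inequality
\begin{equation*}
\ell(\bx(\mu_k)) + \frac{\mu_k}{2}\sum_{i=1}^m \dist(\bx(\mu_k),C_i)^2 \;\leq\; \ell(\bx^{\star}).
\end{equation*}

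From here I would extract two consequences by rearrangement. First, discarding the nonnegative distance sum yields $\ell(\bx(\mu_k))\leq \ell(\bx^{\star})$ for every $k$. Second, isolating the penalty term gives
\begin{equation*}
\sum_{i=1}^m \dist(\bx(\mu_k),C_i)^2 \;\leq\; \frac{2}{\mu_k}\bigl[\ell(\bx^{\star})-\ell(\bx(\mu_k))\bigr].
\end{equation*}
Now let $\bar{\bx}$ be a cluster point of $\{\bx(\mu_k)\}$ and pass to a subsequence $\bx(\mu_{k_j})\to\bar{\bx}$. By continuity of $\ell$, the sequence $\ell(\bx(\mu_{k_j}))$ is bounded, so the right-hand side above tends to $0$ as $\mu_{k_j}\uparrow\infty$. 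Continuity of the distance function $\bx\mapsto\dist(\bx,C_i)$ then forces $\dist(\bar{\bx},C_i)=0$ for each $i$, and since each $C_i$ is closed, $\bar{\bx}\in\cap_i C_i$. Hence the cluster point is feasible.

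To upgrade feasibility to global optimality, take limits in the first inequality along the convergent subsequence to obtain $\ell(\bar{\bx})\leq\ell(\bx^{\star})$. Combined with feasibility of $\bar{\bx}$ and the fact that $\ell(\bx^{\star})$ is the minimum value of $\ell$ on $\cap_i C_i$, we conclude $\ell(\bar{\bx})=\ell(\bx^{\star})$, so $\bar{\bx}$ itself solves (\ref{eq:opt_prob}).

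The only real obstacle is a technical one: I should verify that appealing to a global solution $\bx^{\star}$ is legitimate, which in the convex setting of interest here follows from nonemptiness of $\cap_i C_i$ together with boundedness of the sublevel sets of $\ell$ restricted to the feasible region (these are mild and standard hypotheses, implicit in the statement since otherwise the original problem might have no optimum to compare against). Everything else is a straightforward exploitation of the descent inequality, continuity of $\ell$ and of the squared-distance functions, and closedness of the $C_i$.
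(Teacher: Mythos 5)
Your argument is correct and is exactly the classical quadratic-penalty argument that the paper relies on: the paper gives no proof of its own but simply restates Theorem 17.1 of \cite{NocWri2006}, whose proof compares $f_{\mu_k}(\bx(\mu_k))$ with the penalized objective at a solution of the constrained problem, deduces feasibility of cluster points from the vanishing penalty term, and then optimality from $\ell(\bx(\mu_k))\le\ell(\bx^{\star})$. As a minor simplification, your worry about the existence of a global minimizer $\bx^{\star}$ can be sidestepped entirely by running the same two inequalities with an arbitrary feasible point $\by$ in place of $\bx^{\star}$, which yields $\ell(\bar{\bx})\le\ell(\by)$ for every feasible $\by$ and hence global optimality of the cluster point without any extra coercivity hypothesis.
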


When $\ell(\bx)$ is coercive and possesses a unique minimizer subject to the constraints, one can justify the stronger claim that the sequence $\bx(\mu_k)$ converges to the minimizer.  Under these assumptions the sequence $\bx(\mu_k)$ is bounded and possesses exactly one cluster point. Therefore, the sequence $\bx(\mu_k)$ converges to that cluster point.  Boundedness of $\bx(\mu_k)$ follows from the inequalities  
\begin{eqnarray*}
\ell[\bx(\mu)] & \leq & f_{\mu}[\bx(\mu)] \leq f_{\mu}(\by) = \ell(\by) 
\end{eqnarray*}
for any feasible point $\by$.

\section{Discussion}

The MM principle is a versatile tool. Here we demonstrate how majorizing a distance function can be leveraged to solve a variety of optimization problems with non-trivial convex constraints. The resulting MM algorithms have simple update formulas that open the door to straightforward parallelization and graceful handling of large data sets. In the case of projection onto an intersection of closed convex sets, we have demonstrated that accelerated variants of the MM algorithm are competitive with the current state-of-the-art algorithms for solving non-smooth convex programs.

Several of our examples rely on the classical penalty method.  This raises the questions of how to select the ultimate penalty constant and how fast to increase it from a low starting value.  The quality of our solutions and the rate of convergence of the MM algorithms depend on these choices. We have given some rough guidelines that work well in practice, but more theoretical and empirical insight would be helpful.  We have not encountered disastrous numerical instabilities in using the penalty method, partially because all of our computations were carried out in double precision.

Distance majorization works best for Euclidean distance. This follows from the fact that explicit formulas are available for several important projection operators. For others, such as projection onto the unit simplex, fast algorithms have been devised.  Nonetheless, as the feasible point and fire station examples show, distance majorization can be applied to non-Euclidean distances.  Is it possible to devise fast MM algorithms for computing non-Euclidean distances? This is an problem area deserving more thorough study.

In the examples we considered here all constraint violations were equally weighted. In problems where some constraints are softer than others, employing nonuniform weights on the penalty terms in the objective function (\ref{eq:general_surrogate}) may be advantageous. Introducing nonuniform weights does not change our qualitative conclusions about convergence but may improve the numerical performance of the algorithm if constraint violations are weighted differently.

Another intriguing issue is the application of distance majorization to minimization of non-convex loss functions $\ell(\bx)$ over the intersection of convex sets. In statistics, many useful robust parameter estimates employ non-convex $\ell(\bx)$, for example Tukey's biweight function and more generally M-estimators \cite{HubRon2009}. Although the strongest convergence guarantees require the uniqueness of a global solution, much of the convergence theory remains intact if convexity is no longer assumed. Our convergence theory shows that the convexity assumption on $\ell(\bx)$ can be relaxed. Extending these results and constructing new practical examples are worthy targets of future research.

\begin{acknowledgements}
We thank Janet Sinsheimer for helpful feedback in the course of this work.
We also thank the anonymous referees and associate editor for their constructive
suggestions. In particular, we appreciate the detailed comments bringing sequential 
unconstrained minimization and SUMMA to our attention and highlighting its connection 
to the MM algorithm. This research was partially supported by United States Public
Health Service grants GM53275 and HG006139.
\end{acknowledgements}

\section*{Appendix}

\section*{Dual ascent algorithm}

We derive a modest generalization of an iterative algorithm for the dual of the projection problem \cite{ComPes2011}. Because constructing the dual program and the associated projected gradient algorithm are exercises in modern convex analysis, we first review a few key facts from this discipline. Readers can consult the references \cite{Ber2009,BorLew2000,HirLem2004,Roc1996,Rus2006} for proofs and further background material.

The Fenchel conjugate $f^\star(\by)$ of a function $f(\bx)$ is defined as 
\begin{eqnarray*}
f^\star(\by) & = & \underset{\bx}{\sup} \left[\by^t\bx - f(\bx) \right].
\end{eqnarray*}
When $f(\bx)$ is convex and lower semicontinuous, it satisfies the biconjugate relation $f^{\star\star}(\bx) = f(\bx)$.
In particular, the conjugate of the indicator function $\delta_C(\bx)$ of a set $C$ is the support function 
\begin{eqnarray*}
\delta_C^\star(\by) & = &\underset{\bx \in C}{\sup} \:\: \by^t\bx
\end{eqnarray*}
of $C$.  When $C$ is closed and convex, $\delta_C^{\star\star}(\bx) = \delta_C(\bx)$.

Recall that a function $f(\bx)$ is strongly convex with parameter $\eta > 0$ if the difference
$f(\bx) - \frac{\eta}{2}\| \bx \|^2_2$ is convex. Thus, a strongly
convex function has a curvature bounded away from zero. If $f(\bx)$ is strongly convex, then the
value $f^\star(\by)$ is attained at a single point $\bx$.  In this case, $f^\star(\by)$ is differentiable with gradient $\nabla f^\star(\by) = \bx$. Furthermore, $\nabla f^\star(\by)$ satisfies the Lipschitz inequality
\begin{eqnarray*}
\| \nabla f^\star(\bz) - \nabla f^\star(\by) \|_2 & \leq & \frac{1}{\eta} \| \bz - \by \|_2.
\end{eqnarray*}
Lipschitz continuity ensures global convergence of the proximal gradient algorithm for solving the dual problem. The proximity-operator $\operatorname{prox}_h(\bz)$ associated with a function $h(\bx)$ is defined as 
\begin{eqnarray*}
\prox_h(\bz) & = & \underset{\bx}{\arg\min}\; \left[ h(\bx) + \frac{1}{2} \lVert \bz - \bx \rVert_2^2 \right].
\end{eqnarray*}
Here the right hand side has a unique minimizer whenever $h(\bx)$ is convex and lower semicontinuous. 
The proximal gradient method \cite{Nes2007} is guaranteed to minimize the function $f(\bx) + g(\bx)$ when
$f(\bx)$ is differentiable, convex, and has a Lipschitz continuous gradient, and $g(\bx)$ is lower-semicontinuous and convex. The proximal gradient method iterates according to 
\begin{eqnarray*}
\bx^{n+1} & = & \prox_{\sigma g}\left[\bx^{n} - \sigma\nabla f(\bx^{n}) \right], 
\end{eqnarray*}
where $\sigma$ denotes a step size and $\bx^{n}$ the $n$th iterate. We recover the classic gradient descent method when $g(\bx)$ is the zero function, and we recover the projected gradient algorithm when $g(\bx) = \delta_C(\bx)$ is the indicator of a closed convex set $C$. Thus, the proximal gradient algorithm generalizes two important algorithm classes.

We are now ready to derive an iterative algorithm for solving the dual program of interest. Consider the slightly more general problem of minimizing a strongly convex function $f(\bx)$ over the intersection of a finite collection of closed convex sets $C_1, \ldots, C_m$. This problem can be reposed as minimizing the
function
\begin{eqnarray*}
f(\bx) + \sum_{i=1}^m \delta_{C_i}(\bx_i)
\end{eqnarray*}
subject to the constraints $\bx_1 = \bx, \ldots, \bx_m = \bx$. The Lagrangian for this problem 
\begin{eqnarray*}
L(\bx, \bx_1, \ldots, \bx_m, \bz_1, \ldots, \bz_m) 
& =  & f(\bx) - \Big(\sum_{i=1}^m \bz_i\Big)^t\bx + \sum_{i=1}^m \left[\delta_{C_i}(\bx_i) + \bz_i^t\bx_i\right].
\end{eqnarray*}
gives rise to the dual function
\begin{eqnarray*}
\mathcal{D}(\bz) 
& = & - f^\star(\bz_1 + \cdots + \bz_m) - \sum_{i=1}^m h_i(\bz_i), \\
\end{eqnarray*}
where $\bz =  (\bz_1, \ldots, \bz_m)$ denotes the concatenation of the dual variables $\bz_i$ and $h_i(\bz_i)$ is the support function of the set $C_i$ at $-\bz_i$. Thus, the dual problem of maximizing $\mathcal{D}(\bz)$ is equivalent to minimizing $f^\star(\bz_1 + \cdots + \bz_m) + \sum_{i=1}^m h_i(\bz_i)$. Given that $f(\bx)$ is strongly convex, $f^\star(\bz)$ is differentiable and in fact $\nabla f^\star(\bz)$ is Lipschitz continuous. Therefore, the dual is a prime candidate to be solved via the proximal gradient method.  Since $\sum_i h_i(\bz_i)$ separates the variable $\bz_i$, the dual proximal gradient step can be computed blockwise as
\begin{eqnarray}
\label{eq:proxgrad}
\bz^{n+1}_i & = & \prox_{\sigma h_i} \left[\bz^{n}_i - \sigma\nabla f^\star(\bz^{n}_1 + \cdots + \bz^{n}_m) \right],
\end{eqnarray}
where $\sigma$ denotes a step size. The algorithm simplifies further by applying the Moreau decomposition \cite[Lemma 2.10]{ComWaj2005}.
\begin{equation}
\label{eq:Moreau_decomp}
\bu = \prox_{\sigma h_i}(\bu) + \sigma\prox_{h_i^\star/\sigma}(\bu/\sigma).
\end{equation}
Note that $\prox_{h_i^\star/\sigma}(\bu/\sigma) = - P_{C_i}(-\bu/\sigma)$ and $\nabla f^\star(\bs)$ is a minimizer of the convex function $f(\bx) - \bs^t \bx$. Combining these identities with (\ref{eq:proxgrad}) and (\ref{eq:Moreau_decomp}) gives the algorithm
\begin{eqnarray*}
\bx^{n} & = &  \underset{\bx}{\arg\min} \;\left[ f(\bx) - (\bz^{n}_1 + \cdots + \bz^{n}_m)^t \bx \right]\\
\bz^{n+1}_i & = &  \bz^{n}_i + \sigma \left[ P_{C_i}(\bx^{n} - \sigma^{-1}\bz^{n}_i) - \bx^{n} \right]. \\
\end{eqnarray*}
Convergence is assured by setting the step length $\sigma = \eta$, where $1/\eta$ is the Lipschitz constant of 
$\nabla f^\star(\bz)$. Thus, the strong convexity condition on $f(\bx)$ is actually required for convergence, since a closed, convex function $f$ is Lipschitz continuous if and only if its conjugate function is strongly convex \cite{KakShaTew2009}.

The Nesterov acceleration mentioned earlier requires just a minor adjustment.
The first two iterates are computed as above; subsequent updates use the following extrapolation steps.
\begin{eqnarray*}
\bx^{n} & = &  \underset{\bx}{\arg\min} \;\left[f(\bx) - (\bz^{n}_1 + \cdots + \bz^{n}_m)^t \bx\right] \\
\bs^{n} & = & \bz^{n}_i + \frac{n-2}{n+1} [\bz^{n} - \bz^{n-1}] \\
\bz^{n+1}_i &= & \bs^{n}_i + \sigma [ P_{C_i}(\bx^{n} - t^{-1}\bs^{n}_i) - \bx^{n}].
\end{eqnarray*}

\section*{An MM formulation that fails the SUMMA condition}

Consider minimizing the univariate function $f(x) = \cos(x)$. According to the quadratic upper bound principle \cite{BLin1988}, the function
\begin{eqnarray*}
g(y \mid x) & = & \cos(x) - \sin(x)(y-x) + \frac{1}{2} (y - x)^2
\end{eqnarray*}
majorizes $f(y)$. The MM algorithm $x_{n} = \psi(x_{n-1})$ employs the iteration map
\begin{eqnarray}
\label{eq:mapping}
\psi(x) & = & x + \sin(x).
\end{eqnarray}
Figure~\ref{fig:cosine_summa} depicts the first two majorizations starting $x_0 = 1$. The global SUMMA condition requires that 
\begin{eqnarray}
\label{eq:summa_mm_ex}
g(x \mid x_{0}) - g(x_1 \mid x_{0}) & \geq & g(x \mid x_1) - f(x)
\end{eqnarray}
for all $x$, but Figure~\ref{fig:cosine_summa} shows that this inequality fails for some $x$. Restricted to the interval $C = [\frac{\pi}{2}, \frac{3\pi}{2}]$, however, the MM algorithm does belong to the SUMMA class.  It is geometrically obvious that all iterates reside in $C$ when iteration commences there.
Furthermore, the objective function is convex, and the MM algorithm reduces to gradient descent with a fixed step size that is no greater than twice the inverse of the Lipschitz constant of $f'(x)$.  In these circumstances Byrne \cite{Byr2008a} verifies the SUMMA condition. 

On the other hand, one can prove convergence without invoking the SUMMA machinery. The MM algorithm has
fixed points at integer multiples of $\pi$.  Even multiples correspond to maxima and odd multiples to minima. The maxima are repelling, and the minima are attracting. If the algorithm starts on the interval 
$[2k \pi, 2(k+1)\pi]$, then it remains there. Hence, the hypotheses of Proposition~\ref{prop:MM_limit_points}
are met. Although the SUMMA condition is helpful in forcing convergence and understanding the rate of convergence, there is no need to compel majorization to satisfy it.

\begin{figure}
\centering
\begin{tabular}{cc}
\subfloat[An MM algorithm for minimizing $f(x) = \cos(x)$. The first two majorizations are shown when $x_0=1$.]{\label{fig:cosine_mm}
\includegraphics[scale=0.45]{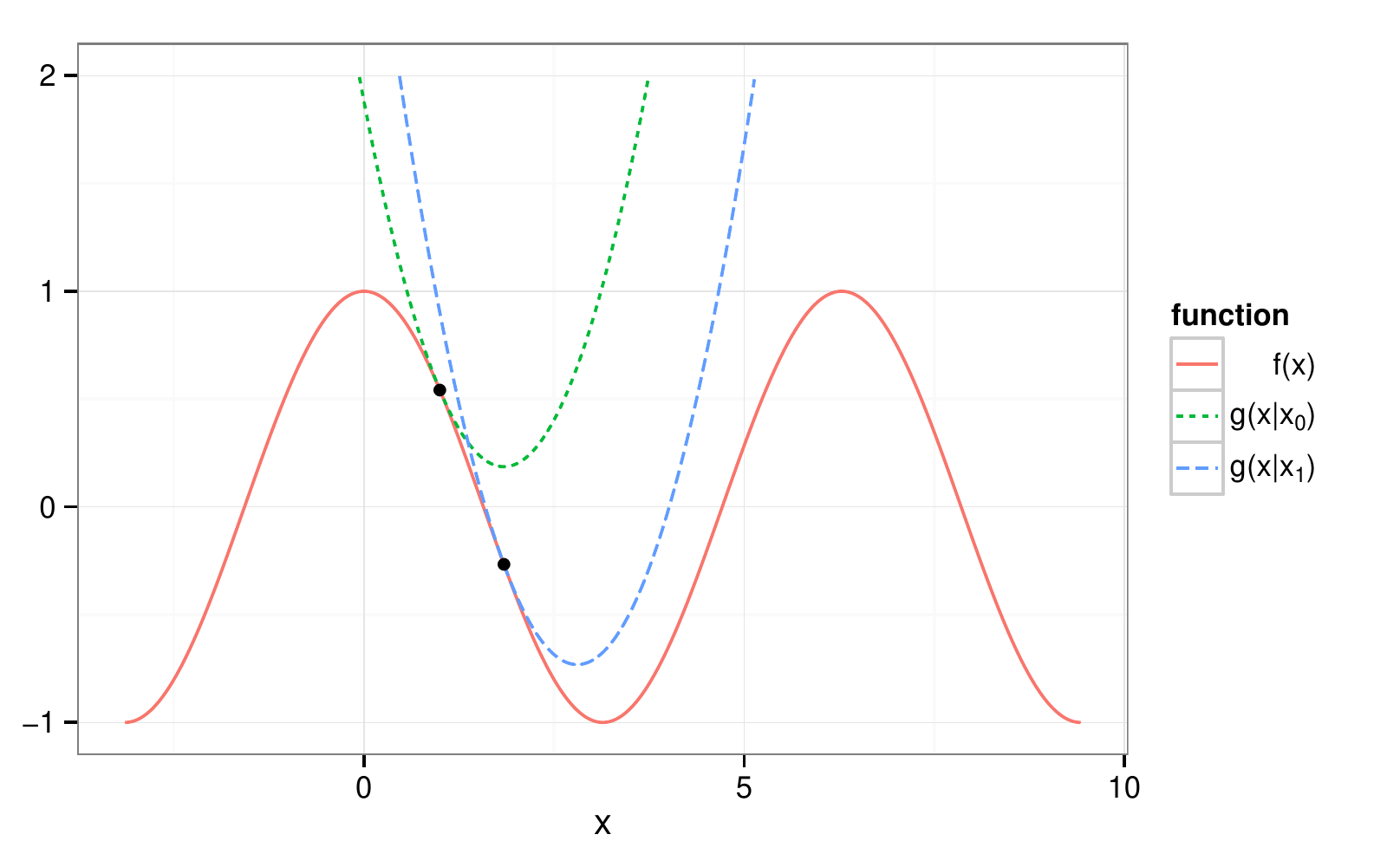}} 
\\
\subfloat[Violation of the SUMMA inequality.  The solid line plots the left hand side (LHS) of the SUMMA inequality (\ref{eq:summa_mm_ex}). The dashed line plots the right hand side (RHS) of the SUMMA inequality (\ref{eq:summa_mm_ex}). The SUMMA inequality requires the dashed line to never cross above the solid line.]{\label{fig:cosine_summa}
\includegraphics[scale=0.45]{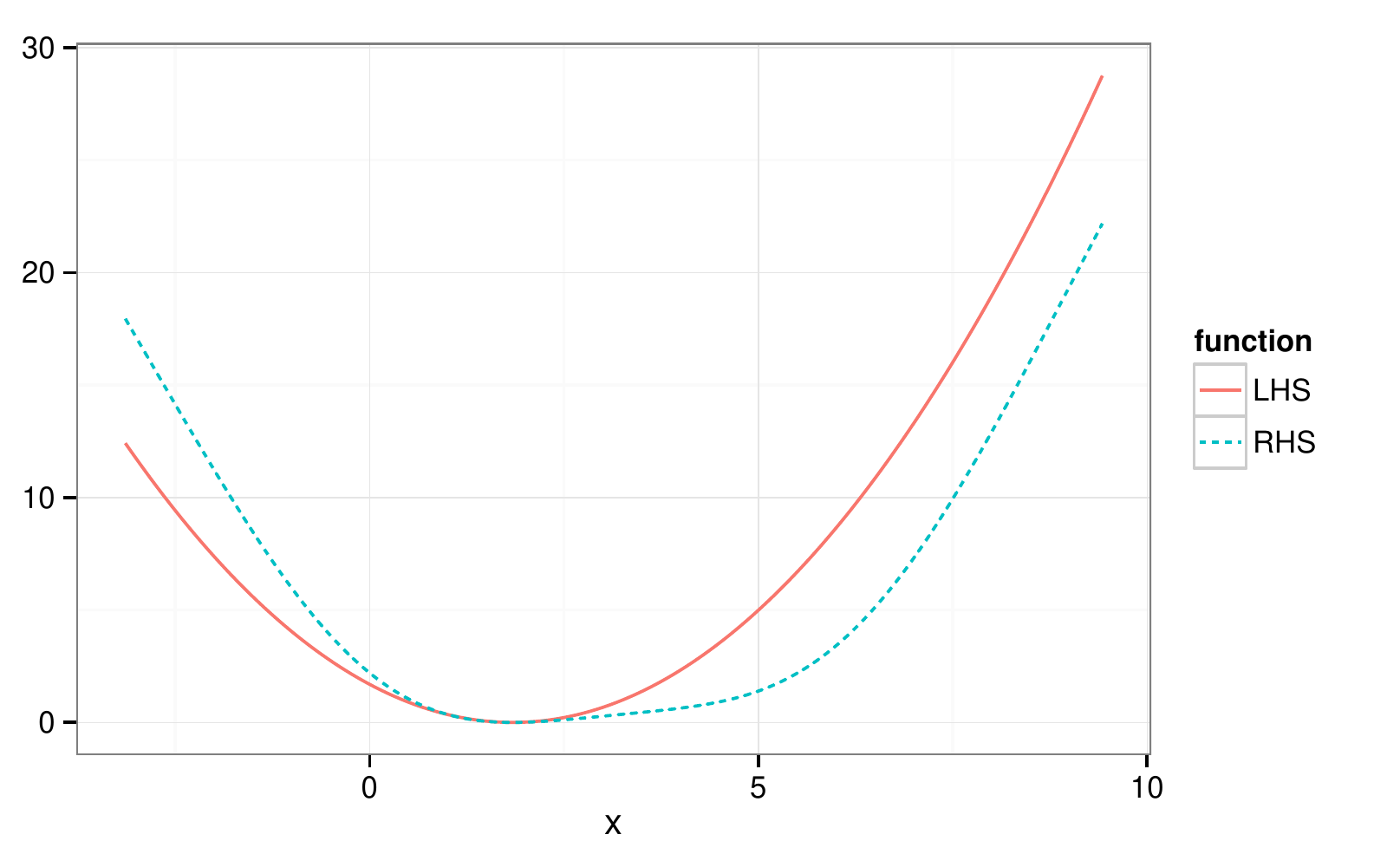}}\\
\end{tabular}
\caption{An example of an MM algorithm whose auxiliary functions fail the SUMMA condition.}
\label{fig:cosine_example}
\end{figure}

\section*{Tukey's Biweight}

In robust linear regression, outliers are a major concern. In standard regression one minimizes 
the squared error loss
\begin{eqnarray*}
f(\bbeta) & = & \sum_{i=1}^n \phi(y_i - \bx_i^t \beta),
\end{eqnarray*}
with $\phi(t) = \frac{1}{2}t^2$, where $y_i$ is the response of case $i$, $\bx_i$ is the predictor vector for 
case $i$, and $\bbeta$ is the vector of regression coefficients. One can moderate the influence of outliers
by substituting Tukey's biweight \cite{HubRon2009}
\begin{eqnarray}
\label{eq:tukey}
\phi(t) = \begin{cases}
\frac{c^2}{6}\left\{1 - \left[1 - \left(\frac{t}{c}\right)^2\right]^3\right\} & \text{if $|t| \leq c$} \\
c^2/6 & \text{if $|t| > c$.}
\end{cases}
\end{eqnarray}
for $\phi(t) = \frac{1}{2}t^2$. The new loss determined by the function (\ref{eq:tukey}) discounts the contribution of residuals $y_i - \bx_i^t \beta$ whose absolute value exceeds $c$. To calculate a global
lower bound $\lambda$ on the eigenvalues of $d^2f(\bx)$, we note that 
\begin{eqnarray*}
d^2f(\bbeta) & = & \sum_{i=1}^n \phi''(y_i - \bx_i^t\bbeta)\bx_i\bx_i^t,
\end{eqnarray*}
where
\begin{eqnarray*}
\phi''(t) = \begin{cases}
1 - 6(\frac{t}{c})^2 + 5(\frac{t}{c})^4 & \text{if $|t| \leq c$} \\
0 & \text{if $|t| > c$.}
\end{cases}
\end{eqnarray*}
The function $\phi''(t)$ achieves a minimum of $-\frac{4}{5}$ at $t = \pm c \sqrt{\frac{3}{5}}$. It follows
that we can take
\begin{eqnarray*}
\kappa & \geq & \frac{4}{5} \rho(\bX^t\bX),
\end{eqnarray*}
where $\bX$ is the matrix with columns $\bx_i$ and $\rho(\bM)$ denotes the largest eigenvalue of the symmetric matrix $\bM$. Interestingly, $\kappa$ does not depend on $c$. Similar calculations can be carried out for other robust choices of $\phi(t)$.

\bibliographystyle{spmpsci}      
\bibliography{DistanceMajorization}   

\end{document}